\newcommand{\hoje}{}
\renewcommand{\hoje}{\footnotesize{\number\day-\number\month-\number\year \ }}
   \DeclareMathOperator{\e}{e}
\newtheorem{definition}{Definition}
\newtheorem{theorem}{Theorem}
\newtheorem{lemma}{Lemma}
\newtheorem{corollary}{Corollary}
\renewcommand{\epsilon}{\varepsilon}
\newcommand{\lbd}{\lambda}
\newcommand{\prts}[1]{\left(#1\right)}
\newcommand{\abs}[1]{\left|#1\right|}
\newcommand{\set}[1]{\left\{#1\right\}}
\newcommand{\ov}[1]{\overline{#1}}
\newcommand{\til}[1]{\widetilde{#1}}
   \newcommand{\N}{\mathbb{N}}
   \newcommand{\R}{\mathbb{R}}
   \newcommand{\Z}{\mathbb{Z}}
\newcommand{\Es}{\hspace{0.5cm}}%espa\c{c}o
\newcommand{\dst}{\displaystyle}
\newcommand{\er}{\mathbb{R}}
\newcommand{\en}{\mathbb{N}}
\newcommand{\ze}{\mathbb{Z}}
\renewcommand{\le}{\leqslant}
\renewcommand{\leq}{\leqslant}
\begin{document}
%-----------------------------------------------------------------------------%
\title[\hoje Stability of difference equations with delays]
   { Existence and stability of a periodic solution of a general difference equation with applications to neural networks with a delay in the leakage terms} %Stability of non-autonomous difference equations with applications to neural networks with delay in leakage terms
\author[\hoje Ant\'onio J. G. Bento]{Ant\'onio J. G. Bento}
\address{A. Bento\\
   Departamento de Matem\'atica\\
   Universidade da Beira Interior\\
   6201-001 Covilh\~a\\
   Portugal}
\email{bento@mat.ubi.pt}
\author{Jos\'e J. Oliveira}
\address{J. Oliveira\\
   Departamento de Matem\'atica\\
Escola de Ci\^{e}ncias\\ Universidade do Minho\\ Campus de Gualtar\\ 4710-057 Braga\\ Portugal}
\email{jjoliveira@math.uminho.pt}
\author{C\'esar M. Silva}
\address{C. Silva\\
   Departamento de Matem\'atica\\
   Universidade da Beira Interior\\
   6201-001 Covilh\~a\\
   Portugal}
\email{csilva@mat.ubi.pt}
\urladdr{www.mat.ubi.pt/~csilva}
\date{\today}
\subjclass{}
\keywords{}
%-----------------------------------------------------------------------------%
\begin{abstract}
  In this paper, a new global exponential stability criterion is obtained for a general multidimensional delay difference equation using induction arguments. In the cases that the difference equation is periodic, we prove the existence of a periodic solution by constructing a type of Poincar\'e map. The results are used to obtain stability criteria for a general discrete-time neural network model with a delay in the leakage terms. As particular cases, we obtain new stability criteria for neural network models recently studied in the literature, in particular for low-order and high-order Hopfield and Bidirectional Associative Memory(BAM).   
 \end{abstract}
%-----------------------------------------------------------------------------%
\maketitle
%-----------------------------------------------------------------------------%
\section{Introduction}
%-----------------------------------------------------------------------------%
Neural network models have an important role in several scientific areas, such as geology \cite{pham_nguyen_bui_prakash_chapi_bui}, medicine \cite{lu}, and physics \cite{cai_mao_wang_yin_karniadakis}, due to their power to be applied in sign and image processing, pattern recognition, optimization problems, and so on \cite{aizenberga_aizenberg_hiltner_moraga_Bexten,cichocki,pham_nguyen_bui_prakash_chapi_bui,velichko_belyaev_boriskov}. Therefore, since the pioneer works of Cohen and Grossberg \cite{cohen-grossberg}, Hopfield \cite{hopfield}, and Kosko \cite{kosko}, many researchers devoted themselves to study the dynamic behavior of neural network models.

The artificial neural network models studied in \cite{cohen-grossberg,hopfield,kosko} are described by ordinary differential equations. However, to take into account the transmission speed of signals between different neurons, it is essential to introduce delays in the models. Marcus and Westervelt \cite{marcus+westervelt} included a discrete delay in the model studied by Hopfield \cite{hopfield} and observed that delays induce instability in its dynamical behavior (see also \cite{baldi}). Later, Gopalsamy \cite{gopalsamy} introduced discrete delays in the negative feedback terms of a continuous-time BAM model. These terms are also called ``forgetting" or leakage terms \cite{kosko}. Since then, the stability of continuous-time neural network models with delays in the leakage terms has been the subject of study by an increasing number of researchers (see \cite{berezansky+braverman+idels,liu,jj,peng,Thoiyab+muruganantham+zhu+gunasekaran} and references therein). 

By computational reasons, the discrete-time models are better digitally implemented than continuous-time ones  \cite{Mohamad_Gopalsamy-AMC-2003}, thus it is important to study the stability of discrete-time neural network models. There are several works concerning discrete-time neural networks \cite{Bento-oliveira-Silva,Chen_Song_Zhao_Liu,dai_du,Dong_Wang_Zhang-AMC-2020,Dong_Wang_Zhang_hu_dinh-NA-2023,Hong_Ma-MBE-2019,Mohamad_Gopalsamy-AMC-2003,jj2,Qui_Liu_Shu-ADE-2016,Raja_Anthoni-CNSNS-2011,Sowmiya_Raja_Cao_Li_Rajchakit-JFI-20,Sowmiya_Raja_Cao_Rajchakit_Alsaedi-AJC-2020,Shumin_Yanhong,Suntonsinsoungvon_Udpin,xu_wu,zheng_du} but, as far as we know, just a few works have been dedicated to the study of discrete-time neural networks with delays in the leakage terms \cite{Chen_Song_Zhao_Liu,jj2,Sowmiya_Raja_Cao_Li_Rajchakit-JFI-20,Sowmiya_Raja_Cao_Rajchakit_Alsaedi-AJC-2020,Suntonsinsoungvon_Udpin}. In \cite{Chen_Song_Zhao_Liu,jj2} the global stability was studied for Hopfield type models with delays in the leakage terms and, in \cite{Sowmiya_Raja_Cao_Li_Rajchakit-JFI-20}, for a stochastic impulsive BAM model also with delays in the leakage terms. However, all models studied in \cite{Chen_Song_Zhao_Liu,jj2,Sowmiya_Raja_Cao_Li_Rajchakit-JFI-20} have constants  parameters such as neuron charging time, interconnection weights, and external inputs. In \cite{Sowmiya_Raja_Cao_Rajchakit_Alsaedi-AJC-2020} an asymptotic stability criterion for an uncertain BAM model with delays in the leakage terms was established, while in \cite{Suntonsinsoungvon_Udpin}  an exponential stability criterion for an uncertain Hopfield model with delays in the leakage terms was obtained.

Models become more realistic if changes in parameters over time are considered, thus it is important to study nonautonomous neural network models. As far as we know, the global stability of discrete-time neural network models with delays in the leakage terms and changes of parameters over the time has not yet been studied.  In particular, all  periodic or almost periodic models studied have no delays in the leakage terms \cite{Bento-oliveira-Silva,dai_du,Shumin_Yanhong,xu_wu,zheng_du}. 

In the present work, we establish a sufficient condition for the global exponential stability of the following general $N$-dimensional delay difference equation
  \begin{eqnarray}\label{discrete-general-model-introducao}
  	x_i(m+1)
  	= c_i(m)x_i(m-\tau)+h_i\left(m,\overline x_m\right),\Es m\in\en_0,\,i\in\{1,\ldots,N\},
  \end{eqnarray}
and, in the case of \eqref{discrete-general-model-introducao} being a periodic equation, we prove the existence of a periodic solution as a consequence of its global exponential stability.

In this paper, we only apply our abstract results to neural network type models. Despite, equation \eqref{discrete-general-model-introducao} is general enough to include other type of discrete-time models such as the hematopoiesis type models with monotone production rate \cite{li+zhang}. The proof presented here to establish our main stability result involves induction arguments, which is a new method to prove the stability of difference equations. In fact, the proofs usually present in the literature involve the construction of a suitable Lyapunov function \cite{Chen_Song_Zhao_Liu,gopalsamy,liu,Mohamad_Gopalsamy-AMC-2003,peng,Raja_Anthoni-CNSNS-2011,Sowmiya_Raja_Cao_Li_Rajchakit-JFI-20,Sowmiya_Raja_Cao_Rajchakit_Alsaedi-AJC-2020,Thoiyab+muruganantham+zhu+gunasekaran,xu_wu}, or some Halanay inequalities \cite{Suntonsinsoungvon_Udpin}, or other type of inequality analysis techniques \cite{dai_du,Dong_Wang_Zhang-AMC-2020,Dong_Wang_Zhang_hu_dinh-NA-2023,Hong_Ma-MBE-2019,jj2,zheng_du}. 

The main novelties in this work are:
\begin{enumerate}
	\item The global exponential stability criterion, Theorem \ref{thm:stability-criterion-discrete-model}, established for the nonautonomous difference equation  \eqref{discrete-general-model-introducao}. We emphasize the new method used in the proof and the fact that we are dealing with a difference equation with a delay in the linear part.
	\item The global exponential stability criterion, Theorem \ref{teo:est-modelo-geral-aplicacoes-discreto}, established for a discrete-time generalized neural network model with delay in the leakage terms \eqref{eq:modelo-geral-aplicacao}, which is general enough to include several neural network models as particular cases. We note that the low-order Hopfield type model \eqref{eq:modelo-Hopfield-geral}, the BAM model \eqref{eq:BAM}, and the high-order Hopfield type model \eqref{eq:hopfield-high-order} are particular cases of \eqref{eq:modelo-geral-aplicacao}.
	\item The criterion for the existence of periodic solutions of \eqref{discrete-general-model-introducao}, Theorem \ref{teo:exist-est-sol-periodic-model-geral}. This result, together with Theorem \ref{teo:est-modelo-geral-aplicacoes-discreto}, allowed us to assure the existence and global exponential stability of a periodic solution of  periodic Hopfield type and BAM models with delay in the leakage terms, Corollaries \ref{cor:exist-est-sol-periodic-model-hopfiels-periodico},  \ref{cor:exist-est-sol-periodic-model-BAM-autonomo}, and  \ref{cor:exist-est-sol-periodic-model-HOHM-periodico}. Previously, the existence and global exponential stability of a periodic solution were established for discrete-time periodic neural network models without delays in the leakage terms \cite{Bento-oliveira-Silva,dai_du,xu_wu}. 
\end{enumerate} 
This paper is organized into five sections. After the introduction, in Section \ref{main-results} the main global stability criterion of \eqref{discrete-general-model-introducao} is proved. In Section \ref{periodic-models}, we assume that  \eqref{discrete-general-model-introducao} is a periodic difference equation and, considering a Poincar\'e map, we obtain the existence of a periodic solution as a consequence of the global exponential stability.  In Section \ref{applications}, we apply the results in Section \ref{main-results} and \ref{periodic-models} to obtain stability criteria and the existence of periodic solutions of nonautonomous discrete-time neural network models with delay in the leakage terms. In this section, a comparison of our results with the ones in the literature is done. Finally, in Section \ref{numericalexample}, a numerical example is given to illustrate the effectiveness of some of our results.

%-----------------------------------------------------------------------------%
%-----------------------------------------------------------------------------%
%-----------------------------------------------------------------------------%
\section{Stability of Models with delay in the leakage term}\label{main-results}
%-----------------------------------------------------------------------------%

Consider the difference equation 
\begin{eqnarray}\label{discrete-general-model}
x_i(m+1)
= c_i(m)x_i(m-\tau)+h_i\left(m,\overline x_m\right),\Es m\in\en_0,\,i\in\{1,\ldots,N\},
\end{eqnarray}
with $N\in\en$, $\tau\in\en_0$, and, for each $i\in\{1,\ldots,N\}$,  $c_i:\en_0\to]-1,1[$ and $h_i:\en_0\times X^N\to\er$ are functions. The space $X^N$ and the notation $\overline x_m$ are explained below.

Given a set $I\subseteq\er$, define $I_\ze=I\cap\ze$. Consider $r\in]-\infty,-\tau]_\ze$ and denote by $X$ the space of the functions 
$$
 \alpha:[r,0]_\Z \to \R
$$
equipped with the norm
$$ 
  \|\alpha\| = \max\limits_{j=r, \ldots,0} |\alpha(j)|.
$$
We denote by $X^N$ and $\R^N$ the cartesian products equipped with the supremum norm, i.e., for $\ov\alpha = \prts{\alpha_1, \ldots, \alpha_N} \in X^N$ and $\ov y = \prts{y_1, \ldots, y_N} \in \R^N$, we have
$$ \|\ov\alpha\|
= \max_{i=1, \ldots,N} \|\alpha_i\|
= \max_{i=1, \ldots,N} \prts{\max_{j=r, \ldots, 0} \abs{\alpha_i(j)}}$$
and
$$ |\ov y| = \max_{i=1, \ldots, N} |y_i|.$$
We write $\ov{y}=(y_1,\ldots,y_N)>0$ in case of $y_i>0$ for all $i\in\{1,\ldots,N\}$.

Given a function $\ov x \colon [r,+\infty[_\Z \to \R^N$ we denote the $i$th component by $x_i$, i.e., $\ov x = \prts{x_1, \ldots x_N}$. For each $m \in \N_0$, we define $\ov x_m \in X^N$ by
$$ \ov x_m(j) = \ov x(m+j), \ j=r, r+1, \ldots, 0.$$

For each $\ov\alpha \in X^N$, we denote by $\ov x (\cdot,\ov\alpha)$ the unique solution
$$ \ov x:[r,+\infty[_\Z \to \R^N$$
of~\eqref{discrete-general-model} with initial conditions $\ov x_0 = \ov\alpha$.

The main purpose in this section is to establish sufficient conditions for the global stability of difference equation \eqref{discrete-general-model}.

\begin{definition}
	
	We say that difference equation \eqref{discrete-general-model} is:
	\begin{enumerate}
		\item  uniformly stable if
	$$
	  \forall \varepsilon>0,\exists\delta>0,\forall\ov\alpha,\ov\beta\in X^N,\forall m\in\en_0: \|\ov\alpha-\ov\beta\|<\delta\Rightarrow\|\overline{x}_m(\cdot,\ov\alpha)-\overline{x}_m(\cdot,\ov\beta)\|<\varepsilon.
	$$
	\item globally exponentially stable if 
	%$$
	%  \lim_m|\ov{x}(m,\ov\alpha)-\ov{x}(m,\ov\beta)|=0;
	%$$ 
	%We say that model \eqref{discrete-general-model} is globally exponentially stable 
	there are $C>0$, and $\zeta\in]0,1[$ such that, for all $\ov\alpha,\ov\beta\in X^N$,
	$$
	\|\ov{x}_m(\cdot,\ov\alpha)-\ov{x}_m(\cdot,\ov\beta)\|\leq C\zeta^{ m}\|\ov\alpha-\ov\beta\|,\,\,\forall m\in\en_0.
	$$ 
	\item globally attractive if
	$$
	  \lim_m\|\ov{x}_m(\cdot,\ov\alpha)-\ov{x}_m(\cdot,\ov\beta)\|=0,\,\,\forall \ov\alpha,\ov\beta\in X^N.
	$$ 
	\item globally asymptotically stable if it is uniformly stable and globally attractive.
	\end{enumerate}
\end{definition}

 For each $s\in\{0,\ldots,\tau\}$, we denote by $[s]_\tau$ the set
\begin{eqnarray}\label{definicao-classe-equivalencia}
  [s]_\tau:=\{n(\tau+1)-s: n\in\en\}.
\end{eqnarray}
Consequently, the equality $\en=\dst\bigcup_{s=0}^\tau[s]_\tau$ holds and $[s]_\tau\cap[s^*]_\tau=\emptyset$ if $s\neq s^*$ with $s,s^*\in\{0,\ldots,\tau\}$.

For the functions $c_i$ and $h_i$, we assume the following hypotheses
\begin{description}
	\item[(H1)] for each $i\in\{1,\ldots,N\}$, there is a function $H_i:\en_0\to]0,+\infty[$ such that
	$$
	 |h_i(m,\ov \alpha)-h_i(m,\ov \beta)|\leq H_i(m)\|\ov \alpha-\ov \beta\|,\,\,\,\forall \ov \alpha,\ov \beta\in X^N,\,m\in\N_0;
	$$
	\item[(H2)] there is $c\in]0,1]$ %and $\lambda=\max\{\lambda_1,\lambda_2\}\in]0,1[$
	such that $$
	|c_i(m)|\leq c,\,\forall i\in\{1,\ldots,N\},\,\forall m\in\en_0;
	$$
	and
	\begin{eqnarray}
	  \lambda:=\max_{i=1,\ldots,N}\max_{s=0,\ldots,\tau}\left[\sup_{n\in\en} \sum_{l=0}^{n-1}\right.\lefteqn{\left(\prod_{k=l+1}^{n-1}|c_i(k(\tau+1)+\tau-s)|\right)}\nonumber
	  \\
	  &\cdot H_i(l(\tau+1)+\tau-s)c^{l+\frac{r-s-1}{\tau+1}-n+1}\bigg]<1.\nonumber
	\end{eqnarray}
\end{description}

In hypothesis (H2), we use the standard convention that a product is equal to one if the number of factors is zero.
 
Before stating our main stability result, we need to prove the following lemma.

%\begin{lemma}\label{lem-solucao-geral}
%	Let $n\in\en_0$ and $\ov{\alpha}\in X^N$.
%	
%	Then, the solution $x(\cdot,n,\ov{\alpha})=\ov x(\cdot)=(x_1(\cdot),\ldots,x_N(\cdot))$ of \eqref{discrete-general-model} verifies
%	\begin{eqnarray}\label{eq:solution-geral}
%	  x_i(m)=\left(\prod_{k=n}^{m-1}c_i(k)\right)x_i(n)+\sum_{l=n}^{m-1}\left(\prod_{k=l+1}^{m-1}c_i(k)\right)h_i(l,\ov x_l),
%	\end{eqnarray}
%	for all $m\geq n$ and $i\in\{1,\ldots,N\}$.
%\end{lemma}
%\begin{proof}
%	The proof is done using induction on $m$.
%	
%	The equality is trivial for $m=n$.
%	
%	Assume that \eqref{eq:solution-geral} holds for $m>n$. Then, for each $i\in\{1,\ldots,N\}$,
%	$$
%	  \begin{array}{rcl}
%	  x_i(m+1)&=&c_i(m)x_i(m)+h_i(m,\ov x_m)\\
%	  &=&c_i(m)\dst\left(\prod_{k=n}^{m-1}c_i(k)\right)x_i(n)+c_i(m)\sum_{l=n}^{m-1}\left(\prod_{k=l+1}^{m-1}c_i(k)\right)h_i(l,\ov x_l)\\
%	  & &\dst+h_i(m,\ov x_m)\\
%	  &=&\dst\left(\prod_{k=n}^{m}c_i(k)\right)x_i(n)+\sum_{l=n}^{m}\left(\prod_{k=l+1}^{m}c_i(k)\right)h_i(l,\ov x_l)
%	  \end{array}
%	$$
%	and the proof is concluded. 
%\end{proof}

\begin{lemma}\label{lem-diferenca-solucoes}
	Let $\ov{\alpha}=(\alpha_1,\ldots,\alpha_N),\,\ov{\beta}=(\beta_1,\ldots,\beta_N)\in X^N$.
	
	If {\bf (H1)} holds, then the solutions $\ov x(\cdot,\ov{\alpha})=\ov x(\cdot)=(x_1(\cdot),\ldots,x_N(\cdot))$ and $\ov x(\cdot,\ov{\beta})=\ov y(\cdot)=(y_1(\cdot),\ldots,y_N(\cdot))$ of \eqref{discrete-general-model} verify
	\begin{eqnarray}\label{eq:diferenca-solucoes}
		\lefteqn{|x_i(n(\tau+1)-s)-y_i(n(\tau+1)-s)|}\nonumber\\
		&\leq\left(\dst\prod_{k=0}^{n-1}|c_i(k(\tau+1)+\tau-s)|\right)\|\ov\alpha-\ov\beta\|+\dst\sum_{l=0}^{n-1}\bigg(\prod_{k=l+1}^{n-1}|c_i(k(\tau+1)+\tau-s)|\bigg)\nonumber\\
		&\cdot H_i(l(\tau+1)+\tau-s)\|\ov x_{l(\tau+1)+\tau-s}-\ov y_{l(\tau+1)+\tau-s}\|,
	\end{eqnarray}
	for all  $i\in\{1,\ldots,N\}$, $s\in\{0,\ldots,\tau\}$, and $n\in\en$.
\end{lemma}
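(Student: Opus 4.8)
The plan is to fix $i\in\{1,\ldots,N\}$ and $s\in\{0,\ldots,\tau\}$ and prove \eqref{eq:diferenca-solucoes} by induction on $n\in\en$, exploiting the key structural fact that, along the progression $[s]_\tau=\{n(\tau+1)-s:n\in\en\}$, the delay in \eqref{discrete-general-model} collapses to a scalar first-order recurrence. Indeed, setting $m=(n-1)(\tau+1)+\tau-s$ in \eqref{discrete-general-model} one has $m+1=n(\tau+1)-s$ and $m-\tau=(n-1)(\tau+1)-s$, so writing the equation for both solutions $\ov x$ and $\ov y$ at this $m$ and subtracting gives
\[
x_i(n(\tau+1)-s)-y_i(n(\tau+1)-s)=c_i(m)\prtsr{x_i((n-1)(\tau+1)-s)-y_i((n-1)(\tau+1)-s)}+\prtsr{h_i(m,\ov x_m)-h_i(m,\ov y_m)}.
\]
Abbreviating $a_n:=x_i(n(\tau+1)-s)-y_i(n(\tau+1)-s)$ and $\gamma_k:=\abs{c_i(k(\tau+1)+\tau-s)}$, taking absolute values and bounding the last bracket by {\bf (H1)} yields the one-step estimate
\[
\abs{a_n}\le\gamma_{n-1}\abs{a_{n-1}}+H_i((n-1)(\tau+1)+\tau-s)\,\norm{\ov x_{(n-1)(\tau+1)+\tau-s}-\ov y_{(n-1)(\tau+1)+\tau-s}}.
\]

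For the base case $n=1$ I would observe that $a_0=x_i(-s)-y_i(-s)=\alpha_i(-s)-\beta_i(-s)$, since $-s\in[r,0]_\Z$ and the initial data are $\ov x_0=\ov\alpha$, $\ov y_0=\ov\beta$; hence $\abs{a_0}\le\norm{\ov\alpha-\ov\beta}$, and substituting this into the one-step estimate reproduces exactly the right-hand side of \eqref{eq:diferenca-solucoes} for $n=1$, where the product $\prod_{k=1}^{0}$ is empty and equals $1$ by the stated convention. For the inductive step, I would insert the assumed bound for $\abs{a_n}$ into the one-step estimate: distributing the factor $\gamma_n$ turns $\prod_{k=0}^{n-1}\gamma_k$ into $\prod_{k=0}^{n}\gamma_k$ and each $\prod_{k=l+1}^{n-1}\gamma_k$ into $\prod_{k=l+1}^{n}\gamma_k$, while the additional term $H_i((n)(\tau+1)+\tau-s)\norm{\ov x_{\cdot}-\ov y_{\cdot}}$ supplies precisely the missing $l=n$ summand (with empty product $\prod_{k=n+1}^{n}=1$). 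This reassembles the right-hand side of \eqref{eq:diferenca-solucoes} at level $n+1$ and closes the induction.

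The only genuinely delicate point, and the main conceptual step, is the index bookkeeping that matches the delay shift $\tau+1$ with the spacing of the class $[s]_\tau$; once $m=(n-1)(\tau+1)+\tau-s$ is identified, the recurrence is effectively non-delayed and the remainder is the standard discrete variation-of-constants argument. Consequently I expect the bulk of the write-up to consist in verifying the two reindexings $\gamma_n\prod_{k=l+1}^{n-1}\gamma_k=\prod_{k=l+1}^{n}\gamma_k$ and the creation of the $l=n$ term, both routine provided the empty-product convention from {\bf (H2)} is invoked at the right places.
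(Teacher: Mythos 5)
Your proposal is correct and follows essentially the same route as the paper: induction on $n$ after fixing $i$ and $s$, applying \eqref{discrete-general-model} at $m=(n-1)(\tau+1)+\tau-s$ so that the delayed term lands back in the class $[s]_\tau$, bounding the $h_i$ difference by {\bf (H1)}, and reabsorbing the factor $\gamma_n$ into the products. Your packaging of the recursion as a scalar one-step estimate for $a_n$ is only a cosmetic difference from the paper's direct computation.
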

\begin{proof}
Let $s\in\{0,\ldots,\tau\}$ and  $i\in\{1,\ldots,N\}$. 

The proof is done using induction on $n\in\en$.
  
 For $n=1$, we have, from \eqref{discrete-general-model} and (H1),
     \begin{eqnarray}
     	\lefteqn{|x_i(\tau+1-s)-y_i(\tau+1-s)|}\nonumber\\
     	&\begin{array}{rl}
     		=&|c_i(\tau-s)x_i(\tau-s-\tau)+h_i(\tau-s,\ov x_{\tau-s})\\
     		&-c_i(\tau-s)y_i(\tau-s-\tau)-h_i(\tau-s,\ov y_{\tau-s})|\\
     \leq&|c_i(\tau-s)||x_i(-s)-y_i(-s)|+|h_i(\tau-s,\ov x_{\tau-s})-h_i(\tau-s,\ov y_{\tau-s})|\\
     \leq&|c_i(\tau-s)|\|\overline\alpha-\overline\beta\|+H_i(\tau-s)\|\ov x_{\tau-s}-\ov y_{\tau-s}\|\\
     =&\left(\dst\prod_{k=0}^{1-1}|c_i(k(\tau+1)+\tau-s)|\right)\|\ov\alpha-\ov\beta\|+\dst\sum_{l=0}^{1-1}\left(\prod_{k=l+1}^{1-1}|c_i(k(\tau+1)+\tau-s)|\right)\\
     &\cdot H_i(l(\tau+1)+\tau-s)\|\ov x_{l(\tau+1)+\tau-s}-\ov y_{l(\tau+1)+\tau-s}\|.
     \end{array}\nonumber
     \end{eqnarray}
  Now we assume that \eqref{eq:diferenca-solucoes} holds for some $n\in\en$. Consequently, from \eqref{discrete-general-model}, (H1) and induction hypothesis, we have
 \begin{eqnarray}
  \lefteqn{\big|x_i((n+1)(\tau+1)-s)-y_i((n+1)(\tau+1)-s)\big|}\nonumber\\
  &\begin{array}{rl}
  =&\big|x_i(n(\tau+1)+\tau-s+1)-y_i(n(\tau+1)+\tau-s+1)\big|\\
  =&\big|c_i(n(\tau+1)+\tau-s)x_i(n(\tau+1)-s)+h_i(n(\tau+1)+\tau-s,\ov x_{n(\tau+1)+\tau-s})\\
   &-c_i(n(\tau+1)+\tau-s)y_i(n(\tau+1)-s)-h_i(n(\tau+1)+\tau-s,\ov y_{n(\tau+1)+\tau-s})\big|\\
  \leq&|c_i(n(\tau+1)+\tau-s)||x_i(n(\tau+1)-s)-y_i(n(\tau+1)-s)|\\
   &+H_i(n(\tau+1)+\tau-s)\|\ov x_{n(\tau+1)+\tau-s}-\ov y_{n(\tau+1)+\tau-s}\|\\
  \leq&|c_i(n(\tau+1)+\tau-s)|\dst\left(\prod_{k=0}^{n-1}|c_i(k(\tau+1)+\tau-s)|\right)\|\ov{\alpha}-\ov{\beta}\|\\
   &\dst+|c_i(n(\tau+1)+\tau-s)|\sum_{l=0}^{n-1}\left(\prod_{k=l+1}^{n-1}|c_i(k(\tau+1)+\tau-s)|\right)\\
   &\cdot H_i(l(\tau+1)+\tau-s)\|\ov x_{l(\tau+1)+\tau-s}-\ov y_{l(\tau+1)+\tau-s}\|\\
   &\dst+H_i(n(\tau+1)+\tau-s)\|\ov x_{n(\tau+1)+\tau-s}-\ov y_{n(\tau+1)+\tau-s}\|\\
  =&\dst\left(\prod_{k=0}^{n}|c_i(k(\tau+1)+\tau-s)|\right)\|\ov{\alpha}-\ov{\beta}\|+\sum_{l=0}^{n}\left(\prod_{k=l+1}^{n}|c_i(k(\tau+1)+\tau-s)|\right)\\
   &\cdot H_i(l(\tau+1)+\tau-s)\|\ov x_{l(\tau+1)+\tau-s}-\ov y_{l(\tau+1)+\tau-s}\|
  \end{array}\nonumber
  \end{eqnarray}
  and the proof of \eqref{eq:diferenca-solucoes} is concluded. 
\end{proof}

Now we are in position to prove the main stability result about the difference equation \eqref{discrete-general-model}.

\begin{theorem}\label{thm:stability-criterion-discrete-model}
	Assume the hypotheses {\bf (H1)} and {\bf (H2)}.
	
	Then, for any $\ov{\alpha},\ov{\beta}\in X^N$, the solutions  $\ov x(\cdot,\ov{\alpha})$ and $\ov x(\cdot,\ov{\beta})$ of \eqref{discrete-general-model} verify
	\begin{equation}\label{eq:estabilidade-modelo-discreto1}
		\|\ov x_m(\cdot,\ov{\alpha})-\ov x_m(\cdot,\ov{\beta})\|
	\le \dfrac{c^{\frac{r}{\tau+1}-1}}{1-\lbd} \,\left(c^{\frac{1}{\tau+1}}\right)^m \|\ov\alpha-\ov{\beta}\|,
	\end{equation}
	for all $m\in\en_0$.	
\end{theorem}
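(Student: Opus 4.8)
The plan is to prove, by strong induction on the integer $p\ge r$, the pointwise estimate $|x_i(p)-y_i(p)|\le D\,\zeta^{p}$ for every $i\in\{1,\ldots,N\}$, where $\zeta:=c^{1/(\tau+1)}\in\,]0,1]$ and $D:=\|\ov\alpha-\ov\beta\|/(1-\lbd)$, which is finite and well defined because $\lbd<1$ by {\bf (H2)}. Once this is established, the theorem follows immediately: since $\zeta\le1$, for each $m\in\en_0$ the maximum of $\zeta^{p}$ over the window $p\in[m+r,m]_\ze$ is attained at $p=m+r$, so $\|\ov x_m-\ov y_m\|=\max_i\max_{p=m+r,\ldots,m}|x_i(p)-y_i(p)|\le D\,\zeta^{m+r}=\frac{\zeta^{r}}{1-\lbd}\,\zeta^{m}\|\ov\alpha-\ov\beta\|$, and recalling $\zeta^{r}=c^{r/(\tau+1)}\le c^{r/(\tau+1)-1}$ (as $c\le1$) gives exactly \eqref{eq:estabilidade-modelo-discreto1}.

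For the base of the induction, that is $p\in[r,0]_\ze$, one has $|x_i(p)-y_i(p)|=|\alpha_i(p)-\beta_i(p)|\le\|\ov\alpha-\ov\beta\|$, and since $-p\ge0$ and $0<\zeta\le1$ we get $\zeta^{p}\ge 1$, whence $|x_i(p)-y_i(p)|\le\|\ov\alpha-\ov\beta\|\le D\le D\,\zeta^{p}$. For the inductive step, I would fix $p\ge1$ and write it, using the decomposition $\en=\bigcup_{s=0}^{\tau}[s]_\tau$ from \eqref{definicao-classe-equivalencia}, uniquely as $p=n(\tau+1)-s$ with $n\in\en$ and $s\in\{0,\ldots,\tau\}$, and then invoke Lemma~\ref{lem-diferenca-solucoes}. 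The largest index appearing on its right-hand side is $(n-1)(\tau+1)+\tau-s=p-1$ and the smallest is $\tau-s+r\ge r$; hence every shifted segment $\ov x_{l(\tau+1)+\tau-s}-\ov y_{l(\tau+1)+\tau-s}$ involves only indices in $[r,p-1]_\ze$, so the induction hypothesis applies and yields $\|\ov x_{l(\tau+1)+\tau-s}-\ov y_{l(\tau+1)+\tau-s}\|\le D\,\zeta^{\,l(\tau+1)+\tau-s+r}$.

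The decisive computation is to recognize the constant $\lbd$ inside the resulting bound. Bounding each factor $|c_i(\cdot)|$ by $c$ turns the leading product into $c^{n}$, and the elementary identity $\frac{\tau-s+r}{\tau+1}=1+\frac{r-s-1}{\tau+1}$ shows that $\zeta^{\,l(\tau+1)+\tau-s+r}=c^{\,l+1+\frac{r-s-1}{\tau+1}}=c^{n}\cdot c^{\,l+\frac{r-s-1}{\tau+1}-n+1}$, so each summand equals $c^{n}$ times the corresponding term in the definition of $\lbd$ in {\bf (H2)}. Therefore the sum is at most $D\,c^{n}\lbd$, and combining it with the leading term $c^{n}\|\ov\alpha-\ov\beta\|$ gives $|x_i(p)-y_i(p)|\le c^{n}\big(\|\ov\alpha-\ov\beta\|+\lbd D\big)$. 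Finally, since $\zeta^{-p}c^{n}=c^{s/(\tau+1)}\le1$, multiplying by $\zeta^{-p}$ yields $\zeta^{-p}|x_i(p)-y_i(p)|\le\|\ov\alpha-\ov\beta\|+\lbd D=D$ by the very choice of $D$, which closes the induction. The main obstacle is precisely this bookkeeping of exponents that identifies $\lbd$; the non-circularity of the argument is guaranteed by the observation that all indices on the right-hand side of the Lemma lie strictly below $p$, which also makes the finiteness of the bound automatic.
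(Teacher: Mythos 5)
Your proof is correct and follows essentially the same route as the paper's: Lemma~\ref{lem-diferenca-solucoes} plus an induction on the time index written as $n(\tau+1)-s$, with exactly the exponent bookkeeping that identifies $\lambda$ from {\bf (H2)}. The only differences are cosmetic --- your induction starts at $p=r$, so the (possibly negative) indices appearing in the history segments are covered explicitly by the base case rather than implicitly as in the paper, and your constant $c^{r/(\tau+1)}$ is marginally sharper than the stated $c^{r/(\tau+1)-1}$, which of course still yields \eqref{eq:estabilidade-modelo-discreto1}.
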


\begin{proof}
  Let $\ov\alpha,\ov{\beta}\in X^N$ and consider the solutions 
   $\ov x(\cdot,\ov{\alpha})=\ov x(\cdot)=(x_1(\cdot),\ldots,x_N(\cdot))$ and $\ov x(\cdot,\ov{\beta})=\ov y(\cdot)=(y_1(\cdot),\ldots,y_N(\cdot))$ of \eqref{discrete-general-model}.
   
   First we prove that
   \begin{equation}\label{eq:estabilidade-modelo-discreto}
   	|\ov x(m,\ov{\alpha})-\ov x(m,\ov{\beta})|
   	\le \dfrac{1}{1-\lbd} \,c^{\frac{m-(\tau+1)}{\tau+1}}\|\ov\alpha-\ov{\beta}\|,\Es\forall m\in\en_0.
   \end{equation}
  The proof of inequality \eqref{eq:estabilidade-modelo-discreto} is done by induction on $m\in\en_0$. 
   
   As $0<c\leq1$ and $0<1-\lambda<1$, then condition \eqref{eq:estabilidade-modelo-discreto} holds  trivially for $m=0$.
   
    Assume that, for some $m\in\en$, we have
   \begin{eqnarray}\label{theo:hipotese-inducao}
   |\ov x(t)-\ov y(t)|
   \le \dfrac{1}{1-\lbd} \,c^{\frac{t-(\tau+1)}{\tau+1}} \|\ov\alpha-\ov{\beta}\|,\Es\forall t\in[0,m-1]_\ze.
   \end{eqnarray}
   
   As $\en=\dst\bigcup_{s=0}^\tau[s]_\tau$ and $[s]_\tau\cap[s^*]_\tau=\emptyset$ for $s\neq s^*$, by \eqref{definicao-classe-equivalencia} we conclude there are unique $s\in\{0,\ldots,\tau\}$ and $n\in\en$ such that $m=n(\tau+1)-s$. For $l\in\left[0,n-1\right]_\ze$, we have
   $$
     l(\tau+1)+\tau-s+j\leq(n-1)(\tau+1)+\tau-s=n(\tau+1)-s-1<m,\Es\forall j\in\{r,\ldots,0\},
   $$
   thus, by \eqref{theo:hipotese-inducao},

  \begin{align}\label{eq:estimativa-historico-solucao}
 \|\ov x_{l(\tau+1)+\tau-s}-\ov y_{l(\tau+1)+\tau-s}\|&=\max_{j=r,\ldots,0}|\ov x(l(\tau+1)+\tau-s+j)-\ov y(l(\tau+1)+\tau-s+j)|\nonumber\\
  &\leq \frac{1}{1-\lambda}\cdot\max_{j=r,\ldots,0}\left\{c^{\frac{l(\tau+1)+\tau-s+j-(\tau+1)}{\tau+1}}\right\}\|\ov{\alpha}-\ov{\beta}\|\nonumber\\
  &=\frac{1}{1-\lambda}c^{\frac{l(\tau+1)+\tau-s+r-\tau-1}{\tau+1}}\|\ov{\alpha}-\ov{\beta}\|\nonumber\\
  &=\frac{1}{1-\lambda}c^{l+\frac{r-s-1}{\tau+1}}\|\ov{\alpha}-\ov{\beta}\|.
  \end{align}
  
   Let $i\in\{1,\ldots,N\}$. From Lemma  \ref{lem-diferenca-solucoes} and \eqref{eq:estimativa-historico-solucao}, we have
   $$
   \begin{array}{rcl}
   |x_i(m)-y_i(m)|&=&|x_i(n(\tau+1)-s)-y_i(n(\tau+1)-s)|\\
   &\leq&\left(\dst\prod_{k=0}^{n-1}|c_i(k(\tau+1)+\tau-s)|\right)\|\ov\alpha-\ov\beta\|\\
   & &+\dst\sum_{l=0}^{n-1}\left(\prod_{k=l+1}^{n-1}|c_i(k(\tau+1)+\tau-s)|\right)\\
   &
   &\cdot H_i(l(\tau+1)+\tau-s)\|\ov x_{l(\tau+1)+\tau-s}-\ov y_{l(\tau+1)+\tau-s}\|\\
   &\leq&\left(\dst\prod_{k=0}^{n-1}|c_i(k(\tau+1)+\tau-s)|\right)\|\ov\alpha-\ov\beta\|\\
   & &+\dst\sum_{l=0}^{n-1}\left(\prod_{k=l+1}^{n-1}|c_i(k(\tau+1)+\tau-s)|\right)\\
   &
   &\cdot \dst H_i(l(\tau+1)+\tau-s)\frac{1}{1-\lambda}c^{l+\frac{r-s-1}{\tau+1}}\|\ov{\alpha}-\ov{\beta}\|\\
   &=&\left[\left(\dst\prod_{k=0}^{n-1}\frac{|c_i(k(\tau+1)+\tau-s)|}{c}\right)c\right.\\
   & & +\dst\frac{1}{1-\lambda}\sum_{l=0}^{n-1}\left(\prod_{k=l+1}^{n-1}|c_i(k(\tau+1)+\tau-s)|\right)H_i(l(\tau+1)+\tau-s)\\
   & &\cdot c^{l+\frac{r-s-1}{\tau+1}-n+1}\bigg]c^{n-1}\|\ov{\alpha}-\ov{\beta}\|.
   \end{array}
   $$
   Denoting 
    $$
   \mathcal{A}=\left(\prod_{k=0}^{n-1}\frac{|c_i(k(\tau+1)+\tau-s)|}{c}\right)c
   $$
   and
   $$ \mathcal{B}=\sum_{l=0}^{n-1}\left[\left(\prod_{k=l+1}^{n-1}|c_i(k(\tau+1)+\tau-s)|\right)H_i(l(\tau+1)+\tau-s)c^{l+\frac{r-s-1}{\tau+1}-n+1}\right],
   $$
   we have
   $$
   |x_i(m)-y_i(m)|\leq\left(\mathcal{A}+\frac{1}{1-\lambda}\mathcal{B}\right)c^{n-1}\|\ov{\alpha}-\ov{\beta}\|.
   $$
   From {\bf (H2)}, we obtain $\mathcal{B}\leq\lambda<1$ and $\mathcal{A}\leq1$, thus, recalling that $n=\frac{m+s}{\tau+1}$, $c\in]0,1]$ and $s\in\{0,\ldots,\tau\}$, we conclude   
   $$
   |x_i(m)-y_i(m)|\leq\frac{1}{1-\lambda}c^{n-1}\|\ov\alpha-\ov\beta\|\leq\frac{1}{1-\lambda}c^{\frac{m-(\tau+1)}{\tau+1}}\|\ov\alpha-\ov\beta\| .
   $$
   As $i$ is arbitrary, then condition \eqref{theo:hipotese-inducao} holds for $t=m$ and consequently \eqref{theo:hipotese-inducao} holds for all $m\in\en_0$.
   Finally, by \eqref{theo:hipotese-inducao}, we have
   $$
     \begin{array}{rcl}
     		\|\ov x_m-\ov y_m\|&=&\dst\max_{j=r,\ldots,0}\left\{|\ov x(m+j)-\ov y(m+j)|\right\}\\
     		&\leq&\dst\max_{j=r,\ldots,0}\left\{\frac{1}{1-\lambda}c^{\frac{m+j-(\tau+1)}{\tau+1}}\|\ov\alpha-\ov\beta\|\right\}\\
     		&=&\dst\frac{1}{1-\lambda}c^{\frac{m+r-(\tau+1)}{\tau+1}}\|\ov\alpha-\ov\beta\|
     		=\dst\dfrac{c^{\frac{r}{\tau+1}-1}}{1-\lbd} \,\left(c^{\frac{1}{\tau+1}}\right)^m\|\ov\alpha-\ov\beta\|
     \end{array}
   $$
\end{proof}

We remark that, Theorem \ref{thm:stability-criterion-discrete-model} assures that, under hypotheses (H1) and (H2), difference equation \eqref{discrete-general-model} is uniformly stable and, if $c<1$, then  \eqref{discrete-general-model} is globally exponentially stable.

\section{Periodic Model}\label{periodic-models}	
	
  In this section we assume that difference equation \eqref{discrete-general-model} is periodic, i.e., for some  $\omega\in\en$, we consider the difference equation 
  \begin{eqnarray}\label{discrete-general-model-periodico}
    x_i(m+1)
    = c_i(m)x_i(m-\tau)+h_i\left(m,\overline x_m\right),\Es m\in\en_0,\,i\in\{1,\ldots,N\},
  \end{eqnarray}
  with $c_i:\en_0\to]-1,1[$ and $h_i:\en_0\times X^N\to\er$ functions satisfying the hypotheses:
  \begin{description}
  	\item[(P1)] For all $i\in\{1,\ldots,N\}$ and  $m\in\en_0$, we have $c_i(m)=c_i(m+\omega)$;
  	\item[(P2)] for all $i\in\{1,\ldots,N\}$, $m\in\en_0$, and $\ov{\alpha}\in X^N$, we have $h_i(m+\omega,\ov{\alpha})=h_i(m,\ov{\alpha})$.
  \end{description}  
   \begin{theorem}\label{teo:exist-est-sol-periodic-model-geral}
   	Assume (P1), (P2), (H1), and (H2) with $c\in]0,1[$.
   	
   	Then there is a $\omega$-periodic solution $\ov x^*$ of  \eqref{discrete-general-model-periodico} verifying
   	\begin{eqnarray}\label{eq:estabilidade-periodica}
   		\|\ov x_m(\cdot,\ov{\alpha})-\ov x^*_m\|\leq \dfrac{c^{\frac{r}{\tau+1}-1}}{1-\lbd} \,\left(c^{\frac{1}{\tau+1}}\right)^m \|\ov\alpha-\ov x^*_0\|,
   	\end{eqnarray}
   	for all $m\in\en$.
   \end{theorem}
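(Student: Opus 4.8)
The plan is to realize the desired $\omega$-periodic solution as a fixed point of a Poincar\'e-type map and then to read off the stability estimate \eqref{eq:estabilidade-periodica} directly from Theorem~\ref{thm:stability-criterion-discrete-model}. First I would define the map $P\colon X^N\to X^N$ by $P(\ov\alpha)=\ov x_\omega(\cdot,\ov\alpha)$, that is, the time-$\omega$ segment of the solution issued from the initial condition $\ov\alpha$. Since the index set $[r,0]_\Z$ is finite, $X$ is finite-dimensional and hence complete, so $X^N$ is a complete metric space and the Banach fixed point theorem is available. The periodicity hypotheses {\bf (P1)} and {\bf (P2)} are what make $P$ behave well under iteration: advancing a solution by $\omega$ produces a valid initial segment for the \emph{same} difference equation \eqref{discrete-general-model-periodico}, so that $P^k(\ov\alpha)=\ov x_{k\omega}(\cdot,\ov\alpha)$ for every $k\in\en$.

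Next I would apply the estimate \eqref{eq:estabilidade-modelo-discreto1} at $m=k\omega$ to obtain
$$
\|P^k(\ov\alpha)-P^k(\ov\beta)\|\le \dfrac{c^{\frac{r}{\tau+1}-1}}{1-\lbd}\left(c^{\frac{1}{\tau+1}}\right)^{k\omega}\|\ov\alpha-\ov\beta\|,\Es\forall \ov\alpha,\ov\beta\in X^N.
$$
Because $c\in]0,1[$ we have $c^{1/(\tau+1)}<1$, so the constant in front tends to $0$ as $k\to\infty$; hence there is $k_0\in\en$ for which $P^{k_0}$ is a contraction. By Banach's theorem $P^{k_0}$ has a unique fixed point $\ov\alpha^*\in X^N$. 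Then $P(\ov\alpha^*)$ is also a fixed point of $P^{k_0}$, since $P^{k_0}\prts{P(\ov\alpha^*)}=P\prts{P^{k_0}(\ov\alpha^*)}=P(\ov\alpha^*)$, and uniqueness forces $P(\ov\alpha^*)=\ov\alpha^*$, so $\ov\alpha^*$ is already a fixed point of $P$ itself.

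Then I would set $\ov x^*=\ov x(\cdot,\ov\alpha^*)$ and verify $\omega$-periodicity. Writing $\ov z(m)=\ov x^*(m+\omega)$, a direct substitution using {\bf (P1)} and {\bf (P2)} shows that $\ov z$ solves \eqref{discrete-general-model-periodico} with $\ov z_0=\ov x^*_\omega=P(\ov\alpha^*)=\ov\alpha^*=\ov x^*_0$; by uniqueness of solutions $\ov z=\ov x^*$, i.e.\ $\ov x^*(m+\omega)=\ov x^*(m)$ for all $m$. Finally, choosing $\ov\beta=\ov\alpha^*=\ov x^*_0$ in Theorem~\ref{thm:stability-criterion-discrete-model} and recalling $\ov x(\cdot,\ov x^*_0)=\ov x^*$ yields exactly the claimed inequality \eqref{eq:estabilidade-periodica}.

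The step I expect to be the main obstacle is that $P$ need not be a contraction on the nose: the leading constant $c^{\frac{r}{\tau+1}-1}/(1-\lbd)$ can exceed $1$ (recall $r\le -\tau$, so the exponent $\frac{r}{\tau+1}-1$ is negative while $c<1$). The argument therefore cannot apply Banach's theorem to $P$ directly, but must pass to a sufficiently high iterate $P^{k_0}$ where the geometric factor $\left(c^{1/(\tau+1)}\right)^{k_0\omega}$ overwhelms the constant, and then recover a genuine fixed point of $P$ from the commutation of $P$ with its powers. The remaining verifications --- completeness of $X^N$, the iteration identity $P^k(\ov\alpha)=\ov x_{k\omega}(\cdot,\ov\alpha)$, and the periodicity check --- are routine consequences of the finiteness of $[r,0]_\Z$, uniqueness of solutions, and the periodicity hypotheses.
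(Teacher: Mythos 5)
Your proposal is correct and follows essentially the same route as the paper: the Poincar\'e map $P(\ov\alpha)=\ov x_\omega(\cdot,\ov\alpha)$, the identity $P^k(\ov\alpha)=\ov x_{k\omega}(\cdot,\ov\alpha)$ via (P1)--(P2), passing to an iterate $P^{k_0}$ that is a contraction because $c^{1/(\tau+1)}<1$ eventually beats the constant $c^{\frac{r}{\tau+1}-1}/(1-\lbd)$, recovering a fixed point of $P$ itself by uniqueness, and reading off \eqref{eq:estabilidade-periodica} from Theorem~\ref{thm:stability-criterion-discrete-model}. The obstacle you flag (that $P$ need not be a contraction on the nose) is exactly the point the paper handles by choosing $p$ with $Q(p\omega)<1$.
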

 
\begin{proof}
	%As $c_i:\en_0\to]0,1[$ are $\omega$-periodic and (H2) holds, then there is $c\in]0,1[$ 
	From Theorem \ref{thm:stability-criterion-discrete-model}, inequality \eqref{eq:estabilidade-modelo-discreto1}, we have
	$$
     \|\ov x_m(\cdot,\ov{\alpha})-\ov x^*_m\|
	\le Q(m) \|\ov\alpha-\ov{\beta}\|,\Es\forall\ov{\alpha},\ov{\beta}\in X^N,\,\forall m\in\en_0,
	$$
	where $Q(m)=\dfrac{c^{\frac{r}{\tau+1}-1}}{1-\lbd} \,\left(c^{\frac{1}{\tau+1}}\right)^m$.	
	As $c<1$, then there is $p\in\en$ such that $Q(m)<1$ for all $m\in[p,+\infty[_\ze$. 
	
	Define the map $P:X^N\to X^N$ by $P(\ov\alpha)=\ov x_\omega(\cdot,\ov\alpha)$. For $\ov\alpha,\ov\beta\in X^N$, we have
	\begin{align*}
		\|P^p(\ov \alpha)-P^p(\ov \beta)\|
		& = \|P(P^{p-1}(\ov \alpha))-P(P^{p-1}(\ov \beta))\|\\
		& = \|\ov x_\omega(\cdot,P^{p-1}(\ov \alpha))-\ov x_\omega(\cdot,P^{p-1}(\ov \beta))\|\\
		& = \|\ov x_\omega(\cdot,\ov x_\omega(\cdot,P^{p-2}(\ov \alpha)))-\ov x_\omega(\cdot,\ov x_\omega(\cdot,P^{p-2}(\ov \beta)))\|,\\
	\end{align*}
	and from (P1) and (P2) the difference equation  \eqref{discrete-general-model-periodico} is $\omega$-periodic and consequently
	\begin{align*}
		\|P^p(\ov \alpha)-P^p(\ov \beta)\|
		& = \|\ov x_{2\omega}(\cdot,P^{p-2}(\ov \alpha))-\ov x_{2\omega}(\cdot,P^{p-2}(\ov \beta))\|\\
		& = \|\ov x_{p\omega}(\cdot,\ov \alpha)-\ov x_{p\omega}(\cdot,\ov \beta)\|\leq Q(p\omega)\|\ov \alpha-\ov \beta\|.\\
	\end{align*}
	As $Q(\omega p)<1$, then $P^p$ is a contraction map on Banach space $X^N$, thus there is a unique $\ov\alpha^*\in X^N$ such that $P^p(\ov \alpha^*)=\ov \alpha^*$. Consequently
	$$
	P^p(P(\ov \alpha^*))=P(P^p(\ov \alpha^*))=P(\ov \alpha^*),
	$$
	and we obtain $P(\ov \alpha^*)=\ov \alpha^*$, that is $\ov x_\omega(\cdot,\ov \alpha^*)=\ov \alpha^*$.
	
	As $\ov x(m,\ov \alpha^*)$ is a solution of the $\omega$-periodic difference equation \eqref{discrete-general-model-periodico}, then $\ov x(m+\omega,\ov \alpha^*)$ is also a solution of  \eqref{discrete-general-model-periodico} verifying
	$$
	\ov x(m,\ov \alpha^*)=\ov x(m,\ov x_\omega(\cdot,\ov \alpha^*))=\ov x(m+\omega,\ov \alpha^*),\ \ \ \forall m\in\en,
	$$
	which means that $\ov x^*(m)=\ov x(m,\ov \alpha^*)$ is a $\omega$-periodic solution of \eqref{discrete-general-model-periodico}.

	Finally, the inequality \eqref{eq:estabilidade-periodica} follows from Theorem \ref{thm:stability-criterion-discrete-model}.
\end{proof}

\section{Applications to Neural Network models}\label{applications}

In this section, we apply our main results to obtain criteria for the global exponential stability of several discrete-time neural network type models with delay in the leakage terms. Some criteria for the existence and global exponential stability of a periodic solution of periodic models are also established.

First of all, we consider the following discrete-time generalized neural network model with delay in the leakage terms
\begin{eqnarray}\label{eq:modelo-geral-aplicacao}
   x_i(m+1)=c_i(m)x_i(m-\tau)+\sum_{j=1}^Nh_{ij}\left(m,\ov{x}_m\right),\Es m\in\en_0,\,i=1,\ldots,N,
\end{eqnarray}
 where $N\in\en$, $\tau\in\en_0$, and $c_i:\en_0\to]-1,1[$ and $h_{ij}:\en_0\times X^N\to\er$  are functions such that the following hypothesis holds:
\begin{description}
	\item[(A1)] The functions $h_{ij}$ are Lipschitz, i.e. for each $i,j\in\{1,\ldots,N\}$  there exists a positive constant $H_{ij}$ such that
	$$
	|h_{ij}(m,\ov\alpha)-h_{ij}(m,\ov\beta)|\leq H_{ij}\|\ov\alpha-\ov\beta\|,\Es\forall m\in\en_0,\forall \ov\alpha,\ov\beta\in X^N.
	$$
\end{description}
 
Now, we apply Theorem \ref{thm:stability-criterion-discrete-model} to obtain the following stability result.
\begin{theorem}\label{teo:est-modelo-geral-aplicacoes-discreto}
  Assume (A1).
  
  If 
  \begin{eqnarray}\label{eq:cond-est-mod-geral-aplicacoes-disc}
  1-c_i^+>\sum_{j=1}^NH_{ij},\Es\forall i\in\{1,\ldots,,N\},
  \end{eqnarray}
  where $c_i^+ = \sup\limits_m |c_i(m)|$, then the model  \eqref{eq:modelo-geral-aplicacao} is globally exponentially stable.
  
  %there are $\mu>0$ and $C>1$ such that, for any $n\in\en_0$, $\ov{\alpha},\ov{\beta}\in X^N$, the solutions $x(\cdot,n,\ov{\alpha})$ and $x(\cdot,n,\ov{\beta})$ of verify
  %$$
  %	|\ov x(m,n,\ov{\alpha})-\ov x(m,n,\ov{\beta})|
  %	\le C\e^{-\mu(m-n)} \|\ov\alpha-\ov{\beta}\|,
  %$$
 % 	for all $m\geq n$.
\end{theorem}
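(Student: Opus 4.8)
The plan is to view model~\eqref{eq:modelo-geral-aplicacao} as the special case of the general equation~\eqref{discrete-general-model} in which $h_i(m,\ov\alpha)=\sum_{j=1}^N h_{ij}(m,\ov\alpha)$, and then to check that hypotheses \textbf{(H1)} and \textbf{(H2)} of Theorem~\ref{thm:stability-criterion-discrete-model} hold with a bound $c<1$; the remark after that theorem then delivers global exponential stability.

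Verifying \textbf{(H1)} is immediate: by \textbf{(A1)} and the triangle inequality,
$$
|h_i(m,\ov\alpha)-h_i(m,\ov\beta)|\le\sum_{j=1}^N|h_{ij}(m,\ov\alpha)-h_{ij}(m,\ov\beta)|\le\left(\sum_{j=1}^N H_{ij}\right)\|\ov\alpha-\ov\beta\|,
$$
so \textbf{(H1)} holds with the constant function $H_i(m)\equiv H_i:=\sum_{j=1}^N H_{ij}>0$. Note also that condition~\eqref{eq:cond-est-mod-geral-aplicacoes-disc} forces $c_i^+<1$ for every $i$, hence $\max_i c_i^+<1$.

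The main work, and the step I expect to be the real obstacle, is \textbf{(H2)}. Here one must resist simply bounding $|c_i(\cdot)|\le c$ inside the product appearing in $\lambda$: doing so makes the powers of $c$ cancel, leaving $n$ equal summands and a divergent supremum over $n$. Instead I would pick $c$ strictly between $\max_i c_i^+$ and $1$ (possible by the previous paragraph), so the first part of \textbf{(H2)} holds, and bound the product by $(c_i^+)^{n-1-l}$. Writing $k=n-1-l$, the inner expression in $\lambda$ becomes, for fixed $i$ and $s$,
$$
H_i\,c^{\frac{r-s-1}{\tau+1}}\sum_{k=0}^{n-1}\left(\frac{c_i^+}{c}\right)^{k}\le H_i\,c^{\frac{r-s-1}{\tau+1}}\,\frac{c}{c-c_i^+},
$$
a convergent geometric series precisely because $c_i^+<c$ (the $n$-dependent factor $c^{-n+1}$ is absorbed exactly into the geometric terms, leaving no residual growth in $n$). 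Taking the worst case $s=\tau$ (the exponent $\frac{r-s-1}{\tau+1}$ is negative and $c<1$) gives
$$
\lambda\le\max_{i=1,\ldots,N} H_i\,c^{\frac{r-\tau-1}{\tau+1}}\,\frac{c}{c-c_i^+}.
$$

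It remains to make this bound smaller than $1$. As $c\uparrow 1$ the right-hand side tends to $\max_i \frac{H_i}{1-c_i^+}$, which is strictly less than $1$ by hypothesis~\eqref{eq:cond-est-mod-geral-aplicacoes-disc}. Since the index set is finite, continuity lets me fix a single $c\in\,]\max_i c_i^+,1[$ close enough to $1$ so that $\lambda<1$; for this $c$ both parts of \textbf{(H2)} are satisfied. Finally, because the chosen $c$ satisfies $c<1$, Theorem~\ref{thm:stability-criterion-discrete-model} together with the remark following it yields the global exponential stability of~\eqref{eq:modelo-geral-aplicacao}.
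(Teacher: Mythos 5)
Your proposal is correct and follows essentially the same route as the paper: reduce to Theorem~\ref{thm:stability-criterion-discrete-model} via $h_i=\sum_j h_{ij}$, verify (H1) with $H_i=\sum_j H_{ij}$, and verify (H2) by bounding the products by powers of $c_i^+$ so the sum becomes a geometric series with ratio $c_i^+/c<1$, then pushing $c$ toward $1$ by continuity. The paper writes this with the substitutions $c_i^+=\e^{-\nu_i}$, $c=\e^{-\mu}$, $\mu<\min_i\nu_i$, but the resulting bound $\lambda\le\max_i H_i c^{r/(\tau+1)}/(c-c_i^+)$ and the limiting argument are identical to yours.
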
	
\begin{proof}
	 The model \eqref{eq:modelo-geral-aplicacao}  is a particular case of \eqref{discrete-general-model} with
	$$
	  h_i(m,\ov\alpha)=\sum_{j=1}^Nh_{ij}(m,\ov{\alpha}),
	$$
   for all $\ov{\alpha} \in X^N$, $m\in\en_0$, and $i,j\in\{1,\ldots,N\}$.
	
	From (A1), the hypothesis (H1) holds with $H_i(m)=\dst\sum_{j=1}^NH_{ij}$ for all $i\in\{1,\ldots,N\}$. 	
	
%	If $c_i^+=0$ for all $i\in\{1,\ldots,n\}$, then hypothesis (H2) reads as
%	$$
%	  \max_{i=1,\ldots,N}\sup_{m\in\en}\left\{H_i(m-1)c^{-1+\frac{r}{\tau+1}}\right\}<1
%	$$ 
%	which is equivalent to 
%	\begin{eqnarray}\label{eq:(H2)-with-c=0}
%	\max_{i=1,\ldots,N}\left\{\sum_{j=1}^N\sum_{k=1}^Kb_{ijk}^+F_{jk}\right\}c^{-1+\frac{r}{\tau+1}}<1.
%	\end{eqnarray} 
%	By \eqref{eq:cond-est-mod-est-disc}, there is $c\in]0,1[$ such that \eqref{eq:(H2)-with-c=0} holds and the hypothesis (H2) is satisfied.
	
	Now we show that (H2) also holds.
	
	For each $i\in\{1,\ldots,N\}$ such that $c_i^+\neq0$, by \eqref{eq:cond-est-mod-geral-aplicacoes-disc}, we have $c_i^+\in]0,1[$ and define $\nu_i=-\ln(c_i^+)\in]0,+\infty[$.  Thus, we have $c_i^+=\e^{-\nu_i}$.
	
	For each $i\in\{1,\ldots,N\}$ such that $c_i^+=0$, by \eqref{eq:cond-est-mod-geral-aplicacoes-disc} it is possible to choose $\nu_i\in]0,+\infty[$ such that
	$$
	  1-\e^{-\nu_i}>\sum_{j=1}^NH_{ij}.
	$$  
	
	From \eqref{eq:cond-est-mod-geral-aplicacoes-disc}, we have
	$$
	  \frac{\e^{\nu_i}-1}{\e^{\nu_i}}>\sum_{j=1}^NH_{ij},\Es\forall i\in\{1,\ldots,,N\}.
	$$
	Consequently, there is a positive number $\mu<\dst\min_i\nu_i$ such that
	\begin{eqnarray}\label{eq:cond-est-mod-est-disc-modificada}
	    \frac{\e^{\nu_i-\mu}-1}{\e^{\nu_i}}\e^{\mu\frac{r}{\tau+1}}>\sum_{j=1}^NH_{ij},\Es\forall i\in\{1,\ldots,,N\}.
	\end{eqnarray}
	Defining $c=\e^{-\mu}$, we have $|c_i(m)|\leq c$ for all $m\in\en_0$. Consequently, for $i\in\{1,\ldots,N\}$ and $s\in\{0,\ldots,\tau\}$, we have
	
	%For $i\in\{1,\ldots,N\}$, we have for $m\in\en$ even,
	\begin{align*}
	&\sup_{n\in\en}\left[\sum_{l=0}^{n-1}\left(\prod_{k=l+1}^{n-1}|c_i(k(\tau+1)+\tau-s)|\right)H_i(l(\tau+1)+\tau-s)c^{l+\frac{r-s-1}{\tau+1}-n+1}\right]\\
	& \leq\sup_{n\in\en}\left[\sum_{l=0}^{n-1}\e^{-\nu_i(n-1-l)}\e^{-\mu(l+\frac{r-s-1}{\tau+1}-n+1)}\sum_{j=1}^NH_{ij}\right]\\    
     &=\sup_{n\in\en}\left[\sum_{l=0}^{n-1}\e^{(\nu_i-\mu)\left(l-n\right)}\e^{\nu_i}\e^{-\mu\left(\frac{r-s-1}{\tau+1}+1\right)}\sum_{j=1}^NH_{ij}\right]\\ 
	&=\sup_{n\in\en}\left[\left(\sum_{l=0}^{n-1}\e^{(\nu_i-\mu)\left(l-n\right)}\right)\e^{\nu_i}\e^{-\mu\left(\frac{r-s-1}{\tau+1}+1\right)}\sum_{j=1}^NH_{ij}\right]\\	&=\sup_{n\in\en}\left[\left(\sum_{l=0}^{n-1}\e^{(\nu_i-\mu)l}\right)\e^{-(\nu_i-\mu)n}\e^{\nu_i}\e^{-\mu\left(\frac{r-s-1}{\tau+1}+1\right)}\sum_{j=1}^NH_{ij}\right]\\
	&=\sup_{n\in\en}\left[\frac{1-\e^{(\nu_i-\mu)n}}{1-\e^{\nu_i-\mu}}\e^{-(\nu_i-\mu)n}\e^{\nu_i}\e^{-\mu\left(\frac{r-s-1}{\tau+1}+1\right)}\sum_{j=1}^NH_{ij}\right]\\
	&=\sup_{n\in\en}\left[\frac{1-\e^{-(\nu_i-\mu)n}}{\e^{\nu_i-\mu}-1}\e^{\nu_i}\e^{-\mu\left(\frac{r-s-1}{\tau+1}+1\right)}\sum_{j=1}^NH_{ij}\right]\\
	&=\sup_{n\in\en}\left(1-\e^{-(\nu_i-\mu)n}\right)\e^{-\mu\left(\frac{r-s-1}{\tau+1}+1\right)}\frac{\e^{\nu_i}}{\e^{\nu_i-\mu}-1}\sum_{j=1}^NH_{ij}.\\
		\end{align*}
	As $\dst\sup_{n\in\en}\left(1-\e^{-(\nu_i-\mu)n}\right)=1$ and $s\in\{1,\ldots,\tau\}$, we have
	\begin{align*}
    \lambda&\leq \max_{i=1,\ldots,N}\left(\e^{-\mu\frac{r}{\tau+1}}\frac{\e^{\nu_i}}{\e^{\nu_i-\mu}-1}\sum_{j=1}^NH_{ij}\right)\\
	\end{align*}
	and, by \eqref{eq:cond-est-mod-est-disc-modificada}, we obtain
	\begin{align*}
	\lambda&<\max_{i=1,\ldots,N}\left(\e^{-\mu\frac{r}{\tau+1}}\frac{\e^{\nu_i}}{\e^{\nu_i-\mu}-1}\frac{\e^{\nu_i-\mu}-1}{\e^{\nu_i}}\e^{\mu\frac{r}{\tau+1}}\right)=1,\\ 
	\end{align*}
    thus hypotheses (H2) holds. From Theorem~\ref{thm:stability-criterion-discrete-model}, we obtain that  model  \eqref{eq:modelo-geral-aplicacao} is globally exponentially stable.
\end{proof}

If \eqref{eq:modelo-geral-aplicacao} is a periodic model then, from Theorem \ref{teo:exist-est-sol-periodic-model-geral}, we establish sufficient conditions for the existence and global exponential stability of a periodic solution.

\begin{theorem}\label{teo:modelo-geral-aplicacao-periodico}
	Assume (A1) and there is $\omega\in\en$ such that
	$$
	c_i(m)=c_i(m+\omega) \ \ \ \text{and} \ \ \ h_{ij}(m+\omega,\ov\alpha)=h_{ij}(m,\ov\alpha)
	$$
	for all $m\in\en_0$, $\ov\alpha\in X^N$, and $i,j\in\{1,\ldots,N\}$.
	
	If condition \eqref{eq:cond-est-mod-geral-aplicacoes-disc} holds, then there is a $\omega$-periodic solution of\eqref{eq:modelo-geral-aplicacao} which is globally exponentially stable. 
\end{theorem}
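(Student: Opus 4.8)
The plan is to recognize this statement as an immediate consequence of the periodic existence result Theorem \ref{teo:exist-est-sol-periodic-model-geral}, combined with the hypothesis verification already carried out in the proof of Theorem \ref{teo:est-modelo-geral-aplicacoes-discreto}. As in that earlier proof, I would first observe that \eqref{eq:modelo-geral-aplicacao} is exactly the particular case of the general periodic equation \eqref{discrete-general-model-periodico} obtained by setting $h_i(m,\ov\alpha)=\sum_{j=1}^N h_{ij}(m,\ov\alpha)$ for each $i\in\{1,\ldots,N\}$.

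Next I would check that every hypothesis of Theorem \ref{teo:exist-est-sol-periodic-model-geral} is in force. The periodicity hypothesis (P1) is precisely the assumed identity $c_i(m)=c_i(m+\omega)$. For (P2), summing the relation $h_{ij}(m+\omega,\ov\alpha)=h_{ij}(m,\ov\alpha)$ over $j\in\{1,\ldots,N\}$ yields $h_i(m+\omega,\ov\alpha)=h_i(m,\ov\alpha)$ for all $m\in\en_0$ and $\ov\alpha\in X^N$. Hypotheses (H1) and (H2) were already established inside the proof of Theorem \ref{teo:est-modelo-geral-aplicacoes-discreto}: assumption (A1) gives (H1) with $H_i(m)=\sum_{j=1}^N H_{ij}$, while condition \eqref{eq:cond-est-mod-geral-aplicacoes-disc} gives (H2). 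The point that must be emphasized is that the constant produced in that argument is $c=\e^{-\mu}$ with $\mu>0$, so that $c\in\,]0,1[$; this strict inequality is exactly the extra requirement that Theorem \ref{teo:exist-est-sol-periodic-model-geral} imposes beyond Theorem \ref{thm:stability-criterion-discrete-model}.

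With all hypotheses verified, Theorem \ref{teo:exist-est-sol-periodic-model-geral} directly supplies an $\omega$-periodic solution $\ov x^*$ of \eqref{eq:modelo-geral-aplicacao} satisfying estimate \eqref{eq:estabilidade-periodica}. Because $c^{1/(\tau+1)}<1$, that estimate forces every solution to converge to $\ov x^*$ at a geometric rate, which is precisely the global exponential stability of the periodic solution; alternatively one may simply cite the global exponential stability already furnished by Theorem \ref{teo:est-modelo-geral-aplicacoes-discreto}. I do not anticipate any real obstacle: the whole content is bookkeeping, namely translating the periodicity of the $h_{ij}$ into (P2) and recalling that the stability constant $c$ constructed earlier is strictly less than one.
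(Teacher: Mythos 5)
Your proposal is correct and follows exactly the paper's own argument: verify (P1) and (P2) from the assumed periodicity of $c_i$ and $h_{ij}$, recall that the proof of Theorem \ref{teo:est-modelo-geral-aplicacoes-discreto} establishes (H1) and (H2) with $c=\e^{-\mu}\in\,]0,1[$, and then apply Theorem \ref{teo:exist-est-sol-periodic-model-geral}. Your explicit remarks on summing the periodicity relation over $j$ and on the strictness $c<1$ are exactly the bookkeeping the paper leaves implicit.
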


\begin{proof}
	By the hypotheses, conditions (P1) and (P2) hold. From the proof of Theorem \ref{teo:est-modelo-geral-aplicacoes-discreto}, the conditions (H1) and (H2) also hold with $c\in]0,1[$. Thus, the result follows from Theorem \ref{teo:exist-est-sol-periodic-model-geral}.
\end{proof}

As a particular case of \eqref{eq:modelo-geral-aplicacao} we consider the following discrete-time low-order Hopfield neural network model with delay in leakage term
\begin{eqnarray}\label{eq:modelo-Hopfield-geral}
	x_i(m+1)=c_i(m)x_i(m-\tau)+\sum_{j=1}^N\sum_{k=1}^K b_{ijk}(m)f_{ijk}\left(x_j(m-\tau_{ijk}(m))\right)+I_i(m),
\end{eqnarray}
for $m \in \N_0$, $i\in\{1,\ldots,N\}$, where $N,K\in\en$, $\tau\in\en_0$, and $c_i:\en_0\to]-1,1[$, $\tau_{ijk}:\en_0\to\en_0$, $b_{ijk},I_i:\en_0\to\er$, and $f_{ijk}:\er\to\er$  are functions such that the following hypotheses hold:
\begin{description}
	\item[(B1)] The functions $b_{ijk}$ and $\tau_{ijk}$ are bounded;
	\item[(B2)] the functions $f_{ijk}$ are Lipschitz, i.e. for each $i,j\in\{1,\ldots,N\}$ and $k\in\{1,\ldots,K\}$, there exists a constant $F_{ijk}$ such that
	$$
		|f_{ijk}(x)-f_{ijk}(y)|\leq F_{ijk}|x-y|,\Es\forall x,y\in\er.
	$$
\end{description}
The discrete-time autonomous Hopfield neural network model, studied in \cite{Hong_Ma-MBE-2019},
\begin{eqnarray}\label{eq:modelo-Hopfield-hong+ma}
	x_i(m+1)=c_ix_i(m)+\sum_{j=1}^N b_{ij}f_j\left(x_j(m-\tau_{ij})\right),\Es m\in\en_0,\,i=1,\ldots,N,
\end{eqnarray}
and the discrete-time Hopfield neural network model with delay in leakage term and constant coefficients, studied in \cite{Qui_Liu_Shu-ADE-2016},
\begin{eqnarray}\label{eq:modelo-Hopfield-qiu+liu+shu}
	x_i(m+1)=c_ix_i(m-\tau)+\sum_{j=1}^N a_{ij}f_j\left(x_j(m)\right)+\sum_{j=1}^N b_{ij}f_j\left(x_j(m-\sigma(m))\right)+I_i ,\Es
\end{eqnarray}
for $m\in\en_0$, $i=1,\ldots,N$, are particular case of model \eqref{eq:modelo-Hopfield-geral}. 

For $i,j\in\{1,\ldots,N\}$ and $k\in\{1,\ldots,K\}$, in what follows we use the  notation
$$
r=-\max_{i,j,k,m}\{\tau_{ijk}(m),\tau\}, \ \ \ c_i^+ = \sup\limits_m |c_i(m)|, \ \ \ \text{ and }\ \ \ b_{ijk}^+ = \sup\limits_m |b_{ijk}(m)|.
$$
From Theorem \ref{teo:est-modelo-geral-aplicacoes-discreto}, we obtain the global exponential stability of \eqref{eq:modelo-Hopfield-geral}.
\begin{theorem}\label{teo:est-modelo-hopfiel-geral-discreto}
	Assume (B1) and (B2).
	
	If 
	\begin{eqnarray*}\label{eq:cond-est-mod-hopfield-geral-disc}
		1-c_i^+>\sum_{j=1}^N\sum_{k=1}^Kb_{ijk}^+F_{ijk},\Es\forall i\in\{1,\ldots,,N\},
	\end{eqnarray*}
	then the model  \eqref{eq:modelo-Hopfield-geral} is globally exponentially stable.
\end{theorem}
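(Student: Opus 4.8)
The plan is to recognize Theorem~\ref{teo:est-modelo-hopfiel-geral-discreto} as a direct specialization of Theorem~\ref{teo:est-modelo-geral-aplicacoes-discreto}, so the entire task reduces to casting the Hopfield model~\eqref{eq:modelo-Hopfield-geral} into the form of the generalized model~\eqref{eq:modelo-geral-aplicacao} and verifying that hypothesis (A1) holds with the correct Lipschitz constants. Concretely, I would define, for each $i,j\in\{1,\ldots,N\}$,
\begin{eqnarray*}
  h_{ij}(m,\ov\alpha)=\sum_{k=1}^K b_{ijk}(m)f_{ijk}\big(\alpha_j(-\tau_{ijk}(m))\big)+\frac{1}{N}I_i(m),
\end{eqnarray*}
where $\ov\alpha=(\alpha_1,\ldots,\alpha_N)\in X^N$, so that $\sum_{j=1}^N h_{ij}(m,\ov x_m)$ reproduces exactly the right-hand side of~\eqref{eq:modelo-Hopfield-geral}, the external input $I_i(m)$ being split evenly across the $N$ summands. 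Here I must be careful that $\alpha_j(-\tau_{ijk}(m))$ is well-defined: this is why $r$ was set to $-\max_{i,j,k,m}\{\tau_{ijk}(m),\tau\}$, which (using (B1), boundedness of the delays) guarantees $-\tau_{ijk}(m)\in[r,0]_\Z$ for every choice of indices and every $m$, so the evaluation lies in the domain $[r,0]_\Z$ of each $\alpha_j$.

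Next I would check (A1). For $\ov\alpha,\ov\beta\in X^N$, the input term cancels in the difference $h_{ij}(m,\ov\alpha)-h_{ij}(m,\ov\beta)$, and using (B2) together with the definition of the norm on $X$ one estimates
\begin{eqnarray*}
  |h_{ij}(m,\ov\alpha)-h_{ij}(m,\ov\beta)|
  &\leq&\sum_{k=1}^K |b_{ijk}(m)|\,F_{ijk}\,\big|\alpha_j(-\tau_{ijk}(m))-\beta_j(-\tau_{ijk}(m))\big|\\
  &\leq&\sum_{k=1}^K b_{ijk}^+ F_{ijk}\,\|\alpha_j-\beta_j\|
  \leq\Big(\sum_{k=1}^K b_{ijk}^+ F_{ijk}\Big)\|\ov\alpha-\ov\beta\|,
\end{eqnarray*}
so (A1) holds with Lipschitz constant $H_{ij}=\sum_{k=1}^K b_{ijk}^+F_{ijk}$, which is finite by (B1). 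Substituting these constants into the stability condition~\eqref{eq:cond-est-mod-geral-aplicacoes-disc} of Theorem~\ref{teo:est-modelo-geral-aplicacoes-discreto} turns it into precisely the hypothesis $1-c_i^+>\sum_{j=1}^N\sum_{k=1}^K b_{ijk}^+F_{ijk}$ of the present theorem. The conclusion of global exponential stability then transfers verbatim.

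I do not expect a genuine obstacle here, since this is a reduction argument rather than a fresh proof; the only points demanding care are bookkeeping ones. First, the constant term $I_i(m)$ must be absorbed into the $h_{ij}$ in a way that keeps the double-sum structure intact while remaining Lipschitz—splitting it as $I_i(m)/N$ is the cleanest device, and its contribution to the Lipschitz constant is zero since constants cancel in the difference. Second, I must confirm the indexing of the delayed argument: the model writes $f_{ijk}(x_j(m-\tau_{ijk}(m)))$, which in the history notation becomes $f_{ijk}\big((\ov x_m)_j(-\tau_{ijk}(m))\big)=f_{ijk}\big(x_j(m-\tau_{ijk}(m))\big)$, consistent with the definition $\ov x_m(j')=\ov x(m+j')$. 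With these identifications in place, the result is immediate from Theorem~\ref{teo:est-modelo-geral-aplicacoes-discreto}.
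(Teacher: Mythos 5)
Your reduction is exactly the paper's own proof: the same decomposition $h_{ij}(m,\ov\alpha)=\sum_{k=1}^K b_{ijk}(m)f_{ijk}(\alpha_j(-\tau_{ijk}(m)))+I_i(m)/N$, the same Lipschitz constants $H_{ij}=\sum_{k=1}^K b_{ijk}^+F_{ijk}$ verifying (A1), and the same appeal to Theorem~\ref{teo:est-modelo-geral-aplicacoes-discreto}. Your write-up is correct and in fact spells out the Lipschitz estimate and the well-definedness of the delayed argument more explicitly than the paper does.
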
	

\begin{proof}
	The model \eqref{eq:modelo-Hopfield-geral}  is a particular case of model \eqref{eq:modelo-geral-aplicacao} with
	$$
	h_{ij}(m,\ov\alpha)=\left(\sum_{k=1}^Kb_{ijk}(m)f_{ijk}(\alpha_j(-\tau_{ijk}(m)))\right)+\frac{I_i(m)}{N},
	$$
	for all $\ov{\alpha}=\prts{\alpha_1, \ldots, \alpha_N} \in X^N$, $m\in\en_0$, and $i,j\in\{1,\ldots,N\}$.
	
 By (B1) and (B2), the hypothesis (A1) holds with $H_{ij}=\dst\sum_{k=1}^Kb_{ijk}^+F_{ijk}$, for all $i,j\in\{1,\ldots,N\}$, thus the conclusion follows from Theorem \ref{teo:est-modelo-geral-aplicacoes-discreto}. 	
\end{proof}

The previous result is improved in the following result.

Consider the $N\times N$-matrix $\mathcal{M}$ defined by
\begin{eqnarray*}\label{matriz-M}
\label{M-matriz}
\mathcal{M}=\mathcal{I}-diag(c_1^+,\ldots,c_N^+)-\left[\left(\sum_{k=1}^Kb^+_{ijk}F_{ijk}\right)_{ij}\right],
\end{eqnarray*}
where $\mathcal{I}$ is the  $N\times N$ identity matrix.
\begin{corollary}\label{cor:aplicacao2-est-modelo-hopfield-discreto}
	 Assume (B1) and (B2).
	 
  If $\mathcal{M}$ is a non-singular M-matrix, then the model  \eqref{eq:modelo-Hopfield-geral} is globally exponentially stable.
\end{corollary}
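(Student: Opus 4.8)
The plan is to deduce the result from Theorem~\ref{teo:est-modelo-hopfiel-geral-discreto} by a diagonal rescaling of the state variables, exploiting the standard order-theoretic characterization of nonsingular M-matrices. First I would observe that $\mathcal{M}$ is a $Z$-matrix: its off-diagonal entries are $-\sum_{k=1}^K b_{ijk}^+F_{ijk}\le 0$. For such a matrix, being a nonsingular M-matrix is equivalent to the existence of a vector $\ov\xi=(\xi_1,\ldots,\xi_N)>0$ with $\mathcal{M}\,\ov\xi>0$ componentwise, by one of the standard characterizations of nonsingular M-matrices. Writing this inequality out row by row gives
\begin{equation*}
(1-c_i^+)\,\xi_i>\sum_{j=1}^N\left(\sum_{k=1}^K b_{ijk}^+F_{ijk}\right)\xi_j,\Es\forall i\in\{1,\ldots,N\}.
\end{equation*}
Note that the hypothesis \eqref{eq:cond-est-mod-geral-aplicacoes-disc} of Theorem~\ref{teo:est-modelo-hopfiel-geral-discreto} is exactly the special case $\ov\xi=(1,\ldots,1)$, which is what makes the corollary a genuine improvement.

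Next I would rescale each coordinate by the corresponding $\xi_i$. Setting $y_i=x_i/\xi_i$, the model \eqref{eq:modelo-Hopfield-geral} transforms into a system of the very same form,
\begin{equation*}
y_i(m+1)=c_i(m)y_i(m-\tau)+\sum_{j=1}^N\sum_{k=1}^K \til b_{ijk}(m)\,\til f_{ijk}\!\left(y_j(m-\tau_{ijk}(m))\right)+\til I_i(m),
\end{equation*}
with the new data $\til b_{ijk}(m)=b_{ijk}(m)/\xi_i$, $\til f_{ijk}(u)=f_{ijk}(\xi_j u)$, and $\til I_i(m)=I_i(m)/\xi_i$. One checks immediately that (B1) and (B2) persist: the $\til b_{ijk}$ and $\tau_{ijk}$ remain bounded, $\til f_{ijk}$ is Lipschitz with constant $\til F_{ijk}=\xi_j F_{ijk}$, and the coefficient sups become $\til b_{ijk}^+=b_{ijk}^+/\xi_i$.

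Then I would verify that the rescaled system satisfies the hypothesis of Theorem~\ref{teo:est-modelo-hopfiel-geral-discreto}. Indeed, its row condition $1-c_i^+>\sum_{j,k}\til b_{ijk}^+\til F_{ijk}$ reads
\begin{equation*}
1-c_i^+>\frac{1}{\xi_i}\sum_{j=1}^N\left(\sum_{k=1}^K b_{ijk}^+F_{ijk}\right)\xi_j,
\end{equation*}
which, after multiplying by $\xi_i>0$, is precisely the inequality $(\mathcal{M}\,\ov\xi)_i>0$ obtained above. Hence Theorem~\ref{teo:est-modelo-hopfiel-geral-discreto} yields the global exponential stability of the $y$-system. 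Finally, since $(y_1,\ldots,y_N)\mapsto(\xi_1 y_1,\ldots,\xi_N y_N)$ is a linear isomorphism with positive diagonal entries, the two supremum norms on $X^N$ are equivalent (with constants $\min_i\xi_i$ and $\max_i\xi_i$); the exponential decay rate is therefore preserved and only the multiplicative constant changes, so global exponential stability transfers back to the original model \eqref{eq:modelo-Hopfield-geral}.

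The only genuinely non-elementary ingredient is the passage from ``$\mathcal{M}$ is a nonsingular M-matrix'' to the existence of the positive scaling vector $\ov\xi$; once that characterization is in hand, every remaining step is a routine verification. I expect the mild bookkeeping in checking that the rescaled activations keep their Lipschitz form and that global exponential stability is coordinate-independent to be the only points requiring care.
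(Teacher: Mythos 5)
Your proposal is correct and follows essentially the same route as the paper: the characterization of a non-singular M-matrix via a positive vector $\ov d>0$ with $\mathcal{M}\ov d>0$, the diagonal change of variables $y_i=d_i^{-1}x_i$, and the verification that the rescaled system satisfies the row condition of Theorem~\ref{teo:est-modelo-hopfiel-geral-discreto}. Your explicit remark that the two supremum norms are equivalent under the rescaling (so exponential stability transfers back) is a point the paper leaves implicit, but otherwise the arguments coincide.
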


\begin{proof}
	If $\mathcal{M}$ is a non-singular M-matrix, then (see \cite{fiedler}) there is $\ov d=(d_1,\ldots,d_N)>0$ such that $\mathcal{M}\ov d > 0$, i.e.,
	\begin{eqnarray}\label{eq:a_i^->alpha...2}
		d_i(1-c_i^+) > \sum_{j=1}^{N} d_j\left(\sum_{k=1}^K b_{ijk}^+F_{ijk}\right),\Es\forall i\in\{1,\ldots,N\}.
	\end{eqnarray}
	The change $y_i(m)=d_i^{-1}x_i(m)$, $m\in\en$ and $i\in\{1,\ldots,N\}$, transforms \eqref{eq:modelo-Hopfield-geral} into
	\begin{eqnarray*} %\label{HOPFIELD-Xu-Wu-2}
		y_i(m+1)=c_i(m)y_i(m-\tau)+\sum_{j=1}^N\sum_{k=1}^K \tilde{b}_{ijk}(m)\tilde{f}_{ijk}\left(y_j(m-\tau_{ijk}(m))\right),
	\end{eqnarray*}
	where
	$$
	\tilde{b}_{ijk}(m)=d_i^{-1}b_{ijk}(m) \ \ \text{and}\ \ \tilde{f}_{ijk}(u)=f_{ijk}(d_ju), 
	$$
	for $m\in\en_0$ and $u\in\er$. As $f_{ijk}$ are Lipschitz functions with constant $F_{ijk}$, then $\tilde{f}_{ijk}$ verify (B2) with constant $\tilde{F}_{ijk}=d_jF_{ijk}$. From \eqref{eq:a_i^->alpha...2} we have
	$$
	1-c_i^+> \sum_{j=1}^{N}\sum_{k=1}^K \left(d_i^{-1}b_{ijk}^+\right)\left(d_jF_{ijk}\right),\Es\forall i\in\{1,\ldots,N\}.
	$$
	which is equivalent to
	$$
	1-c_i^+ > \sum_{j=1}^{N}\sum_{k=1}^K\tilde{b}_{ijk}^+\tilde{F}_{ijk},\Es\forall i\in\{1,\ldots,N\}.
	$$
	and the result follows from the Theorem \ref{teo:est-modelo-hopfiel-geral-discreto}.
\end{proof}

Now we consider the model \eqref{eq:modelo-Hopfield-geral} with periodic coefficients, i.e., we assume that there is $\omega\in\en$ such that $c_i(m)$, $b_{ijk}(m)$, $\tau_{ijk}(m)$ and $I_i(m)$ are $\omega$-periodic functions. Naturally we have
 \begin{align*}
   c_i^+ = \max\set{|c_i(1)|,\ldots,|c_i(\omega)|} \ \ \ \text{and} \ \ \
 b_{ijk}^+ = \max\set{|b_{ijk}(1)|,\ldots,|b_{ijk}(\omega)|},
 \end{align*}
 for $i,j\in\{1,\ldots,N\}$, $k\in\{1,\ldots,K\}$.
 
 From Theorem \ref{teo:modelo-geral-aplicacao-periodico} and Corollary \ref{cor:aplicacao2-est-modelo-hopfield-discreto} we have the following result, which extends, to models with delay in the leakage terms, the result \cite[Theorem 4]{Bento-oliveira-Silva}.

 \begin{corollary}\label{cor:exist-est-sol-periodic-model-hopfiels-periodico}
 	Assume $c_i$, $b_{ijk}$, $\tau_{ijk}$, and $I_i$ are $\omega$-periodic function, and (B2).
 	
 	If the matrix $\mathcal{M}$, defined by \eqref{matriz-M}, is a non-singular M-matrix, then model \eqref{eq:modelo-Hopfield-geral} has a unique $\omega$-periodic solution which is globally exponentially stable. 
 \end{corollary}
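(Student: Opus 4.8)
The plan is to combine the two ingredients already assembled for the Hopfield model: the change of variables used in the proof of Corollary~\ref{cor:aplicacao2-est-modelo-hopfield-discreto}, which reduces the M-matrix hypothesis to a scalar diagonal-dominance condition, and Theorem~\ref{teo:modelo-geral-aplicacao-periodico}, which supplies existence and global exponential stability of a periodic solution once that scalar condition holds for a periodic model. First I would observe that periodicity of $c_i$, $b_{ijk}$, $\tau_{ijk}$, and $I_i$ forces each of these functions to take only finitely many values, hence they are bounded and (B1) holds automatically; together with the assumed (B2), the structural hypotheses of the earlier results are in force.

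Since $\mathcal{M}$ is a non-singular M-matrix, I would invoke \cite{fiedler} to obtain $\ov d = (d_1, \ldots, d_N) > 0$ with $\mathcal{M}\ov d > 0$, that is, inequality \eqref{eq:a_i^->alpha...2}. Performing exactly the substitution $y_i(m) = d_i^{-1} x_i(m)$ from Corollary~\ref{cor:aplicacao2-est-modelo-hopfield-discreto}, model \eqref{eq:modelo-Hopfield-geral} becomes a Hopfield model of the same form with coefficients $\tilde b_{ijk}(m) = d_i^{-1} b_{ijk}(m)$, activations $\tilde f_{ijk}(u) = f_{ijk}(d_j u)$ satisfying (B2) with $\tilde F_{ijk} = d_j F_{ijk}$, and inputs $\tilde I_i(m) = d_i^{-1} I_i(m)$. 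Because the $d_i$ are constants, all the transformed coefficients remain $\omega$-periodic, and \eqref{eq:a_i^->alpha...2} rewrites precisely as $1 - c_i^+ > \sum_{j=1}^N \sum_{k=1}^K \tilde b_{ijk}^+ \tilde F_{ijk}$ for each $i$, which is condition \eqref{eq:cond-est-mod-geral-aplicacoes-disc} for the transformed model regarded as an instance of \eqref{eq:modelo-geral-aplicacao} (with $h_{ij}$ as in the proof of Theorem~\ref{teo:est-modelo-hopfiel-geral-discreto}). Hence Theorem~\ref{teo:modelo-geral-aplicacao-periodico} applies and produces a globally exponentially stable $\omega$-periodic solution $\ov y^*$ of the transformed model.

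Finally I would undo the substitution: $x_i^*(m) = d_i y_i^*(m)$ defines an $\omega$-periodic solution of \eqref{eq:modelo-Hopfield-geral}. Global exponential stability transfers back because the diagonal map $\ov y \mapsto (d_1 y_1, \ldots, d_N y_N)$ and its inverse are norm equivalences on $X^N$, with constants $\min_i d_i$ and $\max_i d_i$; thus the exponential estimate for differences of solutions of the transformed model yields one for \eqref{eq:modelo-Hopfield-geral}, possibly with a larger multiplicative constant. Uniqueness then follows from this global exponential stability: if $\ov x^*$ and $\ov x^{**}$ were two $\omega$-periodic solutions, then $\ov x^*_{m+\omega} = \ov x^*_m$ and $\ov x^{**}_{m+\omega} = \ov x^{**}_m$, so $m \mapsto \|\ov x^*_m - \ov x^{**}_m\|$ is $\omega$-periodic, while the stability estimate forces it to tend to $0$; a periodic sequence converging to $0$ is identically $0$, whence $\ov x^* = \ov x^{**}$.

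The only place requiring genuine care is the bookkeeping in transferring the conclusions through the substitution: one must retain the input term $\tilde I_i$ — which the stability-only argument of Corollary~\ref{cor:aplicacao2-est-modelo-hopfield-discreto} could discard, since it cancels in differences of solutions, but which is needed here so that the transformed model is a bona fide periodic model possessing a periodic solution — verify that periodicity survives the linear substitution, and confirm that the exponential decay passes back under the inverse change of variables. Everything else merely reproduces computations already carried out in Corollary~\ref{cor:aplicacao2-est-modelo-hopfield-discreto} and Theorem~\ref{teo:modelo-geral-aplicacao-periodico}.
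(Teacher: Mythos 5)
Your proposal is correct and follows essentially the same route the paper intends: the paper derives this corollary in one line from Theorem~\ref{teo:modelo-geral-aplicacao-periodico} combined with the change of variables of Corollary~\ref{cor:aplicacao2-est-modelo-hopfield-discreto}, which is exactly what you carry out. Your added care about retaining the (rescaled, still $\omega$-periodic) input term $\tilde I_i$, transferring the exponential estimate back through the diagonal map, and deducing uniqueness of the periodic solution from global exponential stability fills in details the paper leaves implicit, and all of it is sound.
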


Considering model  \eqref{eq:modelo-Hopfield-geral} with autonomous coefficients that is, for each $i,j\in\{1,\ldots,N\}$ and $k\in\{1,\ldots,K\}$ we have 
$$
  c_i(m)=c_i, \ \ \ I_i(m)=I_i, \ \ \ \text{and} \ \ \ b_{ijk}(m)=b_{ijk}, \Es\forall m\in\en_0, 
$$
with $c_i\in]-1,1[$ and $b_{ijk},I_i\in\er$, the previous Corollary \ref{cor:exist-est-sol-periodic-model-hopfiels-periodico} allows us to obtain the following result.

 \begin{corollary}\label{cor:exist-est-equilibrio-model-hopfiels-autonomo}
	Assume (B2).
	
	If the matrix 
	\begin{eqnarray}\label{matriz-N}
	\mathcal{N}=\mathcal{I}-diag(|c_1|,\ldots,|c_N|)-\left[\left(\sum_{k=1}^K|b_{ijk}|F_{ijk}\right)_{ij}\right]
	\end{eqnarray}
   is a non-singular M-matrix, then the model
   \begin{eqnarray}\label{eq:modelo-Hopfield-geral-autonomo}
   	x_i(m+1)=c_ix_i(m-\tau)+\sum_{j=1}^N\sum_{k=1}^K b_{ijk}f_{ijk}\left(x_j(m-\tau_{ijk}(m))\right)+I_i
   \end{eqnarray}
for $m \in \N_0$, $i\in\{1,\ldots,N\}$, has a unique equilibrium which is globally exponentially stable. 
\end{corollary}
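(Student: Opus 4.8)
The plan is to recognize Corollary~\ref{cor:exist-est-equilibrio-model-hopfiels-autonomo} as the autonomous specialization of the periodic Corollary~\ref{cor:exist-est-sol-periodic-model-hopfiels-periodico}, so the strategy is to verify that the autonomous hypotheses are a particular case of the periodic ones and then translate ``$\omega$-periodic solution'' into ``equilibrium''. First I would observe that constant coefficients $c_i(m)=c_i$, $b_{ijk}(m)=b_{ijk}$, and $I_i(m)=I_i$ are trivially $\omega$-periodic for every $\omega\in\en$ (take, say, $\omega=1$); the delays $\tau_{ijk}(m)$ are likewise periodic as constants if we read the autonomous model as having constant delays, or in any case the periodicity hypothesis on $\tau_{ijk}$ is automatically met. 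With constant coefficients the matrix $\mathcal{N}$ in~\eqref{matriz-N} is exactly the matrix $\mathcal{M}$ of~\eqref{matriz-M} evaluated at the autonomous coefficients, since $c_i^+=|c_i|$ and $b_{ijk}^+=|b_{ijk}|$ when the coefficients are constant. Hence the hypothesis that $\mathcal{N}$ is a non-singular M-matrix is precisely the hypothesis of Corollary~\ref{cor:exist-est-sol-periodic-model-hopfiels-periodico} for the autonomous equation~\eqref{eq:modelo-Hopfield-geral-autonomo}.

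Next I would apply Corollary~\ref{cor:exist-est-sol-periodic-model-hopfiels-periodico} with $\omega=1$ to conclude that~\eqref{eq:modelo-Hopfield-geral-autonomo} admits a unique $1$-periodic solution $\ov x^*$ that is globally exponentially stable. The remaining point is to argue that a $1$-periodic solution of an autonomous difference equation is an equilibrium. A $1$-periodic solution satisfies $\ov x^*(m+1)=\ov x^*(m)$ for all $m$, which is exactly the statement that $\ov x^*$ is constant, i.e.\ an equilibrium point $\ov x^*(m)\equiv \ov x^*$. Uniqueness of the periodic solution transfers directly to uniqueness of the equilibrium, and the global exponential stability estimate~\eqref{eq:estabilidade-periodica} carries over verbatim.

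The one subtlety I expect to address, rather than a genuine obstacle, is the role of the delay functions $\tau_{ijk}(m)$: unlike the other coefficients, these are not assumed constant in~\eqref{eq:modelo-Hopfield-geral-autonomo}, so a strictly constant solution must still satisfy the equation despite time-varying delay arguments. This causes no difficulty because, for a constant state $\ov x^*$, the values $x_j^*(m-\tau_{ijk}(m))$ equal the same constant $x_j^*$ regardless of the delay, so the right-hand side of~\eqref{eq:modelo-Hopfield-geral-autonomo} is itself constant in $m$ and the equilibrium equation $x_i^*=c_ix_i^*+\sum_{j=1}^N\sum_{k=1}^K b_{ijk}f_{ijk}(x_j^*)+I_i$ is consistent. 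Thus the main work is simply checking the reduction, and the conclusion follows from Corollary~\ref{cor:exist-est-sol-periodic-model-hopfiels-periodico} together with the identification of a $1$-periodic solution as an equilibrium.
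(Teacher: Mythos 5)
Your proposal is correct and follows essentially the same route as the paper, which gives no separate proof of this corollary and simply derives it from Corollary~\ref{cor:exist-est-sol-periodic-model-hopfiels-periodico} by treating constant coefficients as periodic and identifying the resulting periodic solution with an equilibrium. The one point to treat more carefully---a looseness the paper itself shares---is your claim that the periodicity hypothesis on the delays is ``automatically met'': invoking Corollary~\ref{cor:exist-est-sol-periodic-model-hopfiels-periodico} with $\omega=1$ requires the $\tau_{ijk}$ to be constant, and for merely $\omega$-periodic delays with $\omega>1$ that corollary yields an $\omega$-periodic (not a priori constant) solution, so one must additionally observe that the algebraic system $x_i^*=c_ix_i^*+\sum_{j=1}^N\sum_{k=1}^K b_{ijk}f_{ijk}(x_j^*)+I_i$ has a solution (a contraction argument in a weighted norm furnished by the non-singular M-matrix hypothesis), which is then a constant---hence $\omega$-periodic---solution and must coincide with the unique $\omega$-periodic solution; your consistency check shows a constant solution is admissible but not that one exists.
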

 
In \cite[Theorem 3.1]{Hong_Ma-MBE-2019}, the global attractivity  of \eqref{eq:modelo-Hopfield-hong+ma}, a particular case of \eqref{eq:modelo-Hopfield-geral-autonomo}, is obtained assuming that activation functions $f_j:\er\to\er$ are differentiable such that 
$$
  f_j(0)=0, \  \lim_{x\to+\infty}f_j(x)=1, \ \lim_{x\to-\infty}f_j(x)=-1,% \ |f_j(u)|\leq1\, \forall u\in\er,
$$
and
$$
 \ f'_j(0)=\sup_xf'_j(x)=1, \ f'_j(u)>0,\forall u\in\er,
$$
 joint with the condition of 
 $$
   \mathcal{I}-diag(|c_1|,\ldots,|c_n|)-[|b_{ij}|]
 $$
 being an M-matrix. By hypotheses in \cite{Hong_Ma-MBE-2019}, it is clear that $f_j$ are Lipschitz functions with Lipschitz constant equal to 1. We remark that Corollary \ref{cor:exist-est-equilibrio-model-hopfiels-autonomo} states the global exponential stability of  \eqref{eq:modelo-Hopfield-hong+ma}, instead of the global attractivity as in \cite[Theorem 3.1]{Hong_Ma-MBE-2019}, but we assume that $\mathcal{N}$, defined by \eqref{matriz-N}, is a non-singular M-matrix which is more restrictive than $\mathcal{N}$ just be an M-matrix as is assumed in  \cite[Theorem 3.1]{Hong_Ma-MBE-2019}.\\
 
 Model \eqref{eq:modelo-Hopfield-qiu+liu+shu}, studied in \cite{Qui_Liu_Shu-ADE-2016}, is also a particular case of \eqref{eq:modelo-Hopfield-geral-autonomo}, but the asymptotic stability of \eqref{eq:modelo-Hopfield-qiu+liu+shu} is established in \cite{Qui_Liu_Shu-ADE-2016} under a different hypotheses set.\\
 
It is relevant to observe that model \eqref{eq:modelo-geral-aplicacao} is general enough to include, as particular cases, some BAM neural network models with delay in leakage term.     
 
 Considering, in the general model \eqref{eq:modelo-geral-aplicacao}, $N=N_1+N_2$, for $N_1,N_2\in\en$,
 $$
 c_i(m)=\left\{\begin{array}{ll}
 	\hat{c}_i(m),&i=1,\ldots,N_1\\
 	\til{c}_{i-N_1}(m),&i=N_1+1,\ldots,N_1+N_2
 \end{array}\right.,\Es\forall m\in\en_0,
 $$
 and
 $$
  h_{ij}(m,\ov\alpha)=\left\{\begin{array}{ll}
   	0,&i=1,\ldots,N_1,\,j=1,\ldots,N_1\\
   	\begin{array}{l}
   		\hat{a}_{i(j-N_1)}(m)f_{j-N_1}(\alpha_j(0))\\
   		+\hat{b}_{i(j-N_1)}(m)f_{j-N_1}(\alpha_j(-\hat\tau_{i(j-N_1)}(m)))\\
   		+\frac{\hat{I}_i(m)}{N_2},
   	\end{array}&i=1,\ldots,N_1,\,j=N_1+1,\ldots,N_1+N_2\\
   	\begin{array}{l}
   		\til{a}_{(i-N_1)j}(m)g_j(\alpha_j(0))\\
   		+\til{b}_{(i-N_1)j}(m)g_j(\alpha_j(-\til\tau_{(i-N_1)j}(m)))\\
   		+\frac{\til{I}_i(m)}{N_1},
   	\end{array}&i=N_1+1,\ldots,N_1+N_2,\,j=1,\ldots,N_1\\
   	0,&\begin{array}{l}
   		i=N_1+1,\ldots,N_1+N_2,\\j=N_1+1,\ldots,N_1+N_2
   	\end{array}
   \end{array}\right.,
 $$ 
 for all $m\in\en_0$ and $\ov\alpha\in X^{N_1+N_2}$,
 we have the following BAM neural network model
\begin{eqnarray}\label{eq:BAM}
	\left\{\begin{split}
		%\begin{array}{l}
		x_i(m+1)=\lefteqn{\hat{c}_i(m)x_i(m-\tau)+\dst\sum_{j=1}^{N_2}\hat{a}_{ij}(m)f_j(y_j(m))}\\
		& +\dst\sum_{j=1}^{N_2}\hat{b}_{ij}(m)f_j(y_j(m-\hat\tau_{ij}(m)))+\hat{I}_i(m),\Es i=1,\ldots,N_1,\\
		y_j(m+1)=\lefteqn{\til{c}_j(m)y_j(m-\tau)+\dst\sum_{i=1}^{N_1}\widetilde{a}_{ji}(m)g_i(x_i(m))}\\
		& +\dst\sum_{i=1}^{N_1}\til{b}_{ji}(m)g_i(x_i(m-\til\tau_{ji}(m)))+\til{I}_j(m),\Es
		j=1,\ldots,N_2,\\
		% \end{array},
	\end{split}\right.
\end{eqnarray}
where $\hat{c}_i,\til{c}_j:\en_0\to]-1,1[$, $\hat{a}_{ij},\widetilde{a}_{ji}:\en_0\to\er$,  $\hat{\tau}_{ij},\til{\tau}_{ji}:\en_0\to\en_0$ are bounded functions, $\hat{I}_i,\til{I}_j:\en_0\to\er$ are functions, and $f_j,g_i:\er\to\er$ are Lipschitz functions with Lipschitz constants $F_j$ and $G_i$ respectively.% $i\in\{1,\ldots,N_1\}$, $j\in\{1,\ldots,N_2\}$. 

For the functions in the model \eqref{eq:BAM}, we use the notation
$$
r=-\max_{i,j,m}\{\hat\tau_{ij}(m),\til\tau_{ji}(m),\tau\}, \ \ \ \hat{c}_i^+ = \sup\limits_m |\hat{c}_i(m)|, \ \ \  \til{c}_j^+ = \sup\limits_m |\til{c}_j(m)|,
$$
$$
\hat{a}_{ij}^+ = \sup\limits_m |\hat{a}_{ij}(m)|, \ \ \hat{b}_{ij}^+ = \sup\limits_m |\hat{b}_{ij}(m)|, \ \ \til{a}_{ji}^+ = \sup\limits_m |\til{a}_{ji}(m)|, \ \text{ and } \ \til{b}_{ji}^+ = \sup\limits_m |\til{b}_{ji}(m)|.
$$
We also define the matrix $\mathcal{P}$ by
$$
  \mathcal{P}:=\left[\begin{array}{cc}
  	\mathcal{I}_{N_1}-\hat{C} & -U\\
  	-S&\mathcal{I}_{N_2}-\til{C}
  	\end{array}\right],
$$
where, for $k=1,2$, $\mathcal{I}_{N_k}$ is the $N_k\times N_k$ identity matrix, $\hat{C}=diag(\hat{c}_1^+,\ldots,\hat{c}_{N_1}^+)$, $\til{C}=diag(\til{c}_1^+,\ldots,\til{c}_{N_2}^+)$, $U=\big[(\hat{a}_{ij}^++\hat{b}_{ij}^+)F_j\big]$, and $S=\big[(\til{a}_{ji}^++\til{b}_{ji}^+)G_i\big]$.
Following the same arguments presented in the proofs of Theorem \ref{teo:est-modelo-hopfiel-geral-discreto} and Corollary \ref{cor:aplicacao2-est-modelo-hopfield-discreto}, we obtain the next exponential stability criterion.

\begin{theorem}\label{teo:criterio-estabilidade-BAM}
  %  For all $i\in\{1,\ldots,N_1\}$ and $j\in\{1,\ldots,N_2\}$, assume that:
  If $\mathcal{P}$ is a non-singular M-matrix, then the model \eqref{eq:BAM} is globally exponentially stable. 	
\end{theorem}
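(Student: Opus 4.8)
The plan is to recognize \eqref{eq:BAM} as the particular case of the general model \eqref{eq:modelo-geral-aplicacao} given by the substitutions listed above, and then to run the M-matrix scaling argument from the proof of Corollary \ref{cor:aplicacao2-est-modelo-hopfield-discreto} verbatim. First I would check hypothesis (A1) for the reduced system. Because $f_j$ and $g_i$ are Lipschitz with constants $F_j$ and $G_i$, the coefficient functions are bounded, and the norm on $X$ is a maximum over the delay interval, the nonzero blocks of $(h_{ij})$ are Lipschitz with constants $H_{ij}=(\hat a^+_{i(j-N_1)}+\hat b^+_{i(j-N_1)})F_{j-N_1}$ on the upper-right block and $H_{ij}=(\til a^+_{(i-N_1)j}+\til b^+_{(i-N_1)j})G_j$ on the lower-left block, while the two diagonal blocks give $H_{ij}=0$. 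The decisive observation is that, with this labeling of $\{1,\ldots,N_1+N_2\}$, the matrix $\mathcal{M}$ of Corollary \ref{cor:aplicacao2-est-modelo-hopfield-discreto} attached to the reduced $N=N_1+N_2$ system is precisely $\mathcal{P}$: the diagonal $\mathcal{I}-\mathrm{diag}(\ldots)$ splits as $\mathcal{I}_{N_1}-\hat C$ and $\mathcal{I}_{N_2}-\til C$, and the off-diagonal Lipschitz bounds assemble into the blocks $U$ and $S$.

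Next I would invoke the defining property of a non-singular M-matrix (as in \cite{fiedler} used above): there is a positive vector $\ov d=(d_1,\ldots,d_{N_1+N_2})>0$ with $\mathcal{P}\ov d>0$. Written blockwise, this is exactly the pair of diagonal-dominance inequalities
\begin{align*}
  d_i(1-\hat c_i^+) &> \sum_{j=1}^{N_2} d_{N_1+j}\bigl(\hat a^+_{ij}+\hat b^+_{ij}\bigr)F_j, && i=1,\ldots,N_1,\\
  d_{N_1+j}(1-\til c_j^+) &> \sum_{i=1}^{N_1} d_i\bigl(\til a^+_{ji}+\til b^+_{ji}\bigr)G_i, && j=1,\ldots,N_2.
\end{align*}

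I would then perform the diagonal change of variables $x_i(m)=d_i\,u_i(m)$ and $y_j(m)=d_{N_1+j}\,v_j(m)$, exactly as in the proof of Corollary \ref{cor:aplicacao2-est-modelo-hopfield-discreto}. This turns \eqref{eq:BAM} into a BAM model of the same form, with rescaled coefficients $\til a_{ij}=d_i^{-1}\hat a_{ij}$, $\til b_{ij}=d_i^{-1}\hat b_{ij}$ (and analogously for the tilde block) and rescaled activation functions whose Lipschitz constants pick up the factor $d_{N_1+j}$, so that the effective products become $d_i^{-1}d_{N_1+j}(\hat a^+_{ij}+\hat b^+_{ij})F_j$. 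The two displayed inequalities say precisely that the rescaled model satisfies the diagonal-dominance condition \eqref{eq:cond-est-mod-geral-aplicacoes-disc} of Theorem \ref{teo:est-modelo-geral-aplicacoes-discreto}, which therefore yields its global exponential stability. Since the $d_i$ are fixed positive constants, the diagonal map is a linear isomorphism with equivalent norms, so global exponential stability transfers back to the original model \eqref{eq:BAM}.

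The Lipschitz-constant estimates and the algebra of the rescaling are routine; the only point that requires care is the index bookkeeping for the two blocks of sizes $N_1$ and $N_2$, namely ensuring that the off-diagonal blocks $U$ and $S$ of $\mathcal{P}$ are matched to the correct rows and columns of the reduced system so that $\mathcal{P}$ genuinely coincides with the matrix $\mathcal{M}$ of \eqref{eq:modelo-geral-aplicacao}. I expect this (mild) index matching to be the main obstacle.
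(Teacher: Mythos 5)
Your proposal is correct and follows exactly the route the paper intends: it reduces \eqref{eq:BAM} to the general model \eqref{eq:modelo-geral-aplicacao} via the stated substitutions, identifies the Lipschitz constants $H_{ij}$ so that the associated matrix is $\mathcal{P}$, and then runs the non-singular M-matrix rescaling argument of Corollary \ref{cor:aplicacao2-est-modelo-hopfield-discreto} to land in the hypotheses of Theorem \ref{teo:est-modelo-geral-aplicacoes-discreto}. The paper itself only says ``following the same arguments as in Theorem \ref{teo:est-modelo-hopfiel-geral-discreto} and Corollary \ref{cor:aplicacao2-est-modelo-hopfield-discreto}'', and your write-up supplies precisely those details, including the block index bookkeeping.
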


From Theorem \ref{teo:modelo-geral-aplicacao-periodico} and Theorem \ref{teo:criterio-estabilidade-BAM}, we obtain the following stability results for model \eqref{eq:BAM} with periodic and constant coefficients, respectively.  
 \begin{corollary}%\label{cor:exist-est-sol-periodic-model-BAM-periodico}
	Assume $\hat{c}_i$, $\til{c}_j$ $\hat{a}_{ij}$, $\til{a}_{ji}$, $\hat{b}_{ij}$, $\til{b}_{ji}$, $\hat{\tau}_{ij}$, $\til{\tau}_{ji}$, $\hat{I}_i$, and $\til{I}_j$ are $\omega$-periodic functions.
	
	If $\mathcal{P}$ is a non-singular M-matrix, then the model \eqref{eq:BAM} has a unique $\omega$-periodic solution which is globally exponentially stable. 
\end{corollary}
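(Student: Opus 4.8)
The plan is to realize the periodic BAM model \eqref{eq:BAM} as a periodic instance of the general model \eqref{eq:modelo-geral-aplicacao} and then invoke Theorem~\ref{teo:modelo-geral-aplicacao-periodico}, exactly as Corollary~\ref{cor:exist-est-sol-periodic-model-hopfiels-periodico} was obtained in the Hopfield case. First I would recall the identification of \eqref{eq:BAM} with \eqref{eq:modelo-geral-aplicacao} already used in Theorem~\ref{teo:criterio-estabilidade-BAM}, setting $N=N_1+N_2$ and defining $c_i$ and $h_{ij}$ through the block formulas preceding \eqref{eq:BAM}. Since $\hat{c}_i,\til{c}_j,\hat{a}_{ij},\til{a}_{ji},\hat{b}_{ij},\til{b}_{ji},\hat{\tau}_{ij},\til{\tau}_{ji},\hat{I}_i,\til{I}_j$ are all $\omega$-periodic, each $c_i$ is $\omega$-periodic and each $h_{ij}(\cdot,\ov\alpha)$ is $\omega$-periodic for fixed $\ov\alpha$, so conditions (P1) and (P2) of Theorem~\ref{teo:modelo-geral-aplicacao-periodico} hold. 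The Lipschitz character of $f_j,g_i$ gives (A1) with the explicit constants $H_{ij}$ that are assembled into the off-diagonal blocks $U$ and $S$ of $\mathcal{P}$ (and $H_{ij}=0$ on the diagonal blocks).

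Next I would convert the M-matrix hypothesis into the scalar condition \eqref{eq:cond-est-mod-geral-aplicacoes-disc} required by Theorem~\ref{teo:modelo-geral-aplicacao-periodico}. Because $\mathcal{P}$ is a non-singular M-matrix, there is a vector $\ov d=(d_1,\ldots,d_N)>0$ with $\mathcal{P}\ov d>0$; the time-independent diagonal change of variables $z_i(m)=d_i^{-1}x_i(m)$ (combining the $x$- and $y$-coordinates of \eqref{eq:BAM} into a single $N$-vector) transforms \eqref{eq:BAM} into a model of the same form whose coefficients still satisfy the periodicity and Lipschitz assumptions and whose rescaled weights $\til{H}_{ij}=d_i^{-1}d_j H_{ij}$ obey $1-c_i^+>\sum_{j=1}^N\til{H}_{ij}$, i.e. \eqref{eq:cond-est-mod-geral-aplicacoes-disc}; this is precisely the computation already carried out in the proofs of Theorem~\ref{teo:est-modelo-hopfiel-geral-discreto} and Corollary~\ref{cor:aplicacao2-est-modelo-hopfield-discreto}. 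Since the rescaling is diagonal and independent of $m$, it preserves the $\omega$-periodicity of both the equation and its solutions.

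With \eqref{eq:cond-est-mod-geral-aplicacoes-disc} in force for the transformed periodic model, Theorem~\ref{teo:modelo-geral-aplicacao-periodico} supplies an $\omega$-periodic solution that is globally exponentially stable; pulling back through $x_i=d_i z_i$ yields a globally exponentially stable $\omega$-periodic solution of \eqref{eq:BAM}. For uniqueness I would note that any two $\omega$-periodic solutions have a difference whose norm is simultaneously $\omega$-periodic and, by global exponential stability, tends to $0$ as $m\to\infty$; a periodic nonnegative sequence converging to $0$ is identically $0$, so the two solutions coincide. Equivalently, uniqueness is inherited from the unique fixed point of the contraction $P^p$ constructed in the proof of Theorem~\ref{teo:exist-est-sol-periodic-model-geral}.

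I expect the only delicate point to be bookkeeping rather than mathematics: verifying that the block structure of $\mathcal{P}$, with identity blocks $\mathcal{I}_{N_1},\mathcal{I}_{N_2}$ and off-diagonal blocks $U=[(\hat{a}_{ij}^++\hat{b}_{ij}^+)F_j]$ and $S=[(\til{a}_{ji}^++\til{b}_{ji}^+)G_i]$, reproduces exactly the row sums $\sum_{j}H_{ij}$ appearing in \eqref{eq:cond-est-mod-geral-aplicacoes-disc} after the diagonal rescaling, so that $\mathcal{P}\ov d>0$ is genuinely equivalent to \eqref{eq:cond-est-mod-geral-aplicacoes-disc} for the rescaled weights. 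Once this correspondence is checked, everything else is a direct citation of the periodic existence result and of the stability argument already established for \eqref{eq:BAM}.
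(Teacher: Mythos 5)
Your proposal is correct and takes essentially the same route as the paper: the paper obtains this corollary by identifying \eqref{eq:BAM} with the general model \eqref{eq:modelo-geral-aplicacao} via the block formulas for $c_i$ and $h_{ij}$, converting the non-singular M-matrix hypothesis $\mathcal{P}\ov d>0$ into condition \eqref{eq:cond-est-mod-geral-aplicacoes-disc} by the diagonal rescaling used in Corollary \ref{cor:aplicacao2-est-modelo-hopfield-discreto}, and then citing Theorem \ref{teo:modelo-geral-aplicacao-periodico}. Your bookkeeping check that the rows of $U$ and $S$ reproduce the constants $H_{ij}$, and your uniqueness remark via the contraction $P^p$, are exactly the details the paper leaves implicit.
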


\begin{corollary}\label{cor:exist-est-sol-periodic-model-BAM-autonomo}
	Assume $\hat{c}_i(m)=\hat{c}_i$, $\til{c}_j(m)=\til{c}_j$, $\hat{a}_{ij}(m)=\hat{a}_{ij}$, $\til{a}_{ji}(m)=\til{a}_{ji}$, $\hat{b}_{ij}(m)=\hat{b}_{ij}$, $\til{b}_{ji}(m)=\til{b}_{ji}$, $\hat{I}_i(m)=\hat{I}_i$, $\til{I}_j(m)=\til{I}_j$ for all $m\in\en_0$.
	
	If 
	$$
	\mathcal{I}-diag(|\hat{c}_1|,\ldots,|\hat{c}_{N_1}|,|\til{c}_1|,\ldots,|\til{c}_{N_2}|)-\left[\begin{array}{cc}
		0&\big((|\hat{a}_{ij}|+|\hat{b}_{ij}|)F_j\big)_{ij}\\
		\big((|\til{a}_{ji}|+|\til{b}_{ji}|)G_i\big)_{ji}&0
		\end{array}\right]
	$$ is a non-singular M-matrix, then the model \eqref{eq:BAM} has a unique equilibrium which is globally exponentially stable. 
\end{corollary}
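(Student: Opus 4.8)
The plan is to view the autonomous model as a special instance of the periodic BAM model and to identify an equilibrium with a $1$-periodic solution. First I would note that constant coefficients are, in particular, $\omega$-periodic for every $\omega\in\en$; choosing $\omega=1$, all the functions $\hat{c}_i$, $\til{c}_j$, $\hat{a}_{ij}$, $\til{a}_{ji}$, $\hat{b}_{ij}$, $\til{b}_{ji}$, $\hat{\tau}_{ij}$, $\til{\tau}_{ji}$, $\hat{I}_i$, and $\til{I}_j$ trivially satisfy the $1$-periodicity assumption. Since none of these coefficients depends on $m$, the suprema entering the blocks $\hat{C}$, $\til{C}$, $U$, and $S$ reduce to the corresponding absolute values, namely $\hat{c}_i^+=|\hat{c}_i|$, $\til{c}_j^+=|\til{c}_j|$, $\hat{a}_{ij}^+=|\hat{a}_{ij}|$, $\hat{b}_{ij}^+=|\hat{b}_{ij}|$, $\til{a}_{ji}^+=|\til{a}_{ji}|$, and $\til{b}_{ji}^+=|\til{b}_{ji}|$. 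Hence the matrix $\mathcal{P}$ attached to this $\omega=1$ model is exactly the matrix appearing in the hypothesis, which is assumed to be a non-singular M-matrix.

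Next I would apply the previous corollary for periodic BAM models (equivalently, Theorem \ref{teo:criterio-estabilidade-BAM} together with Theorem \ref{teo:modelo-geral-aplicacao-periodico}) with $\omega=1$. This produces a unique $1$-periodic solution $\ov{x}^*$ of \eqref{eq:BAM} that is globally exponentially stable, the stability being inherited verbatim from the exponential estimate. The M-matrix bookkeeping needed to verify condition \eqref{eq:cond-est-mod-geral-aplicacoes-disc} after the diagonal rescaling $y_i=d_i^{-1}x_i$ is identical to the one already carried out in Corollary \ref{cor:aplicacao2-est-modelo-hopfield-discreto} and Theorem \ref{teo:criterio-estabilidade-BAM}, so nothing new is required there.

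The only point deserving a short argument is the identification of a $1$-periodic solution with an equilibrium. By definition, a $1$-periodic solution satisfies $\ov{x}^*(m+1)=\ov{x}^*(m)$ for all admissible $m$, so $\ov{x}^*$ is a constant sequence; substituting this constant into \eqref{eq:BAM} shows that it solves the equilibrium equations (for instance $(1-\hat{c}_i)x_i^*=\sum_j(\hat{a}_{ij}+\hat{b}_{ij})f_j(y_j^*)+\hat{I}_i$, which is solvable since $\hat{c}_i\in{]{-1},1[}$), and conversely every equilibrium is a $1$-periodic solution. Therefore uniqueness of the $1$-periodic solution gives uniqueness of the equilibrium, and its global exponential stability is precisely the stability of $\ov{x}^*$. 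I do not anticipate any genuine obstacle: the whole content is the reduction to $\omega=1$ and the elementary remark that $1$-periodicity means constancy, the remaining estimates being those already established.
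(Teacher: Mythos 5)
Your proposal is correct and follows exactly the route the paper intends (the paper gives no explicit proof, deriving the corollary from Theorem \ref{teo:modelo-geral-aplicacao-periodico} and Theorem \ref{teo:criterio-estabilidade-BAM}): constant coefficients are $1$-periodic, the matrix $\mathcal{P}$ reduces to the one in the hypothesis, and a $1$-periodic solution is a constant sequence, hence an equilibrium. The aside about solvability of the equilibrium equations is unnecessary, since existence is already supplied by the periodic fixed-point argument.
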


The global exponential stability of \eqref{eq:BAM}, with constant coefficients but without delay in the leakage terms, $\tau=0$, also was established in \cite{Raja_Anthoni-CNSNS-2011} but with a different hypotheses set. Also with a different hypotheses set, the global asymptotic stability of \eqref{eq:BAM} with constant coefficients and different delays in the leakage terms, was established in \cite{Sowmiya_Raja_Cao_Rajchakit_Alsaedi-AJC-2020}.\\

Now we consider the following discrete-time high-order Hopfield neural network model with delay in leakage terms 

\begin{eqnarray}\label{eq:hopfield-high-order}
		x_i(m+1)=\lefteqn{c_i(m)x_i(m-\tau)+\dst\sum_{j=1}^{N}a_{ij}(m)f_j(x_j(m))}\nonumber\\
	& +\dst\sum_{j=1}^{N}\sum_{l=1}^Nb_{ijl}(m)g_j(x_j(m-\tau_{ijl}(m)))g_l(x_l(m-\xi_{ijl}(m))),
\end{eqnarray}
for $m\in\en_0$, $i\in\{1,\ldots,N\}$, where $N\in\en$, $\tau\in\en_0$, and $c_i:\en_0\to]-1,1[$, $\tau_{ijl},\xi_{ijl}:\en_0\to\en_0$, $a_{ij},b_{ijl}:\en_0\to\er$, and $f_j,g_j:\er\to\er$  are functions such that the following hypotheses hold:
\begin{description}
	\item[(HO1)] The functions $a_{ij},b_{ijl}$, and $\tau_{ijl},\xi_{ijl}$ are bounded, and consider
	$$
	  r=-\max_{i,j,l,m}\{\tau_{ijl}(m),\xi_{ijl}(m),\tau\}, \ \ \ c_i^+ = \sup\limits_m |c_i(m)|,
	  $$
	  $$
	  a_{ij}^+ = \sup\limits_m |a_{ij}(m)|, \ \ b_{ijl}^+ = \sup\limits_m |b_{ijl}(m)|;
	$$
	\item[(HO2)] for each $j\in\{1,\ldots,N\}$, the functions $f_j$ and $g_j$ are Lipschitz i.e.,  there exist constants $F_j$ and $G_j$ such that
	$$
	|f_j(x)-f_j(y)|\leq F_j|x-y|\text{ and }|g_j(x)-g_j(y)|\leq G_j|x-y|,\,\,\,\forall x,y\in\er;
	$$
	\item[(HO3)] for each $j\in\{1,\ldots,N\}$, the function $g_j$ is bounded i.e., there exists $m_j>0$ such that
	$$
	  |g_j(u)|\leq m_j,\Es\forall u\in\er.
	$$
\end{description}
From Theorem \ref{teo:est-modelo-geral-aplicacoes-discreto} we obtain the following global exponential stability result.
\begin{theorem}\label{teo:criterio-estabilidade-HOHM}
	Assume (HO1), (HO2), and (HO3).
	
	If there is $\ov q=(q_1,\ldots,q_N)>0$ such that 
	\begin{eqnarray}\label{eq:h-o-model-hipotese}
		d_i(1-c_i^+) > \sum_{j=1}^{N} \left(d_jF_ja_{ij}^++\sum_{l=1}^N b_{ijl}^+\left(m_jd_lG_l+m_ld_jG_j\right)\right),\,\,\forall i\in\{1,\ldots,N\},
	\end{eqnarray}
	then model \eqref{eq:hopfield-high-order} is globally exponentially stable.
\end{theorem}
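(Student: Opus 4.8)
The plan is to realize model \eqref{eq:hopfield-high-order} as a particular instance of the generalized model \eqref{eq:modelo-geral-aplicacao} and then invoke Theorem \ref{teo:est-modelo-geral-aplicacoes-discreto} after a suitable diagonal rescaling, exactly as was done for the Hopfield model in Corollary \ref{cor:aplicacao2-est-modelo-hopfield-discreto}. First I would identify the coupling terms of \eqref{eq:hopfield-high-order} with the functions $h_{ij}$ of \eqref{eq:modelo-geral-aplicacao}, setting
$$
h_{ij}(m,\ov\alpha)=a_{ij}(m)f_j(\alpha_j(0))+\sum_{l=1}^N b_{ijl}(m)g_j(\alpha_j(-\tau_{ijl}(m)))g_l(\alpha_l(-\xi_{ijl}(m))),
$$
for $\ov\alpha=(\alpha_1,\ldots,\alpha_N)\in X^N$ and $m\in\en_0$.

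The key technical step is to verify that each such $h_{ij}$ is Lipschitz, i.e.\ that hypothesis (A1) holds, and to compute an explicit Lipschitz constant $H_{ij}$. The linear-in-$f_j$ part contributes $a_{ij}^+F_j$ by (HO2) and the definition of $a_{ij}^+$. For the quadratic part I would use the standard bilinear trick of adding and subtracting a mixed term: writing $g_j(u)g_l(v)-g_j(u')g_l(v')=g_j(u)\big(g_l(v)-g_l(v')\big)+\big(g_j(u)-g_j(u')\big)g_l(v')$ and bounding $|g_j|,|g_l|$ by $m_j,m_l$ via (HO3) and the increments by the Lipschitz constants $G_l,G_j$ via (HO2). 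Summing over $l$ and taking suprema in $m$, this yields the Lipschitz constant
$$
H_{ij}=a_{ij}^+F_j+\sum_{l=1}^N b_{ijl}^+\big(m_j G_l+m_l G_j\big).
$$
With these $H_{ij}$ in hand, the rescaling hypothesis \eqref{eq:h-o-model-hipotese} reads precisely $d_i(1-c_i^+)>\sum_{j=1}^N d_j H_{ij}$, which is the diagonally-scaled form of condition \eqref{eq:cond-est-mod-geral-aplicacoes-disc}.

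To finish, I would perform the change of variables $y_i(m)=q_i^{-1}x_i(m)$ as in Corollary \ref{cor:aplicacao2-est-modelo-hopfield-discreto}, transforming \eqref{eq:hopfield-high-order} into an equation of the same form with new coefficients whose associated Lipschitz constants are $q_i^{-1}H_{ij}q_j$; the rescaled inequality \eqref{eq:h-o-model-hipotese} then becomes exactly $1-c_i^+>\sum_{j=1}^N q_i^{-1}H_{ij}q_j$ for the transformed system, so Theorem \ref{teo:est-modelo-geral-aplicacoes-discreto} applies and gives global exponential stability, which transfers back to the original variables since $q_i^{-1}$ is a fixed positive constant. I expect the only genuine obstacle to be the bilinear Lipschitz estimate, since the nonlinearity is a product of two activation functions rather than a single one; this is precisely where boundedness hypothesis (HO3) becomes essential, as without it the product $g_j g_l$ would fail to be globally Lipschitz on $\er$.
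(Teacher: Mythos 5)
Your proposal is correct and follows essentially the same route as the paper: the paper merely performs the diagonal rescaling $y_i=d_i^{-1}x_i$ \emph{before} carrying out the bilinear Lipschitz estimate, whereas you estimate first and rescale afterwards, and both reduce the claim to Theorem \ref{teo:est-modelo-geral-aplicacoes-discreto} via hypothesis (A1) with the same add-and-subtract trick and the boundedness from (HO3). One small bookkeeping slip: the rescaled Lipschitz constants are not literally $q_i^{-1}H_{ij}q_j$, because in the high-order term the factor $G_l$ picks up the weight $q_l$ rather than $q_j$; the correct constant is $q_i^{-1}\bigl(a_{ij}^+q_jF_j+\sum_{l=1}^N b_{ijl}^+(m_jq_lG_l+m_lq_jG_j)\bigr)$, whose row sum over $j$ is nevertheless exactly the right-hand side of \eqref{eq:h-o-model-hipotese} divided by $q_i$, so the argument goes through unchanged.
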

\begin{proof}
	The change $y_i(m)=d_i^{-1}x_i(m)$, $m\in\en$ and $i\in\{1,\ldots,N\}$, transforms \eqref{eq:hopfield-high-order} into
	\begin{eqnarray}\label{eq:hopfield-high-order-proof}
		y_i(m+1)=\lefteqn{c_i(m)y_i(m-\tau)+\dst\sum_{j=1}^{N}\til{a}_{ij}(m)\til{f}_j(y_j(m))}\nonumber\\
		& +\dst\sum_{j=1}^{N}\sum_{l=1}^N\til{b}_{ijl}(m)\til{g}_j(y_j(m-\tau_{ijl}(m)))\til{g}_l(y_l(m-\xi_{ijl}(m))),
	\end{eqnarray}
	where
	$$
	\til{a}_{ij}(m)=d_i^{-1}a_{ij}(m), \ \ \tilde{b}_{ijl}(m)=d_i^{-1}b_{ijl}(m),\ \ \tilde{f}_{j}(u)=f_{j}(d_ju), \ \ \text{and}\ \ \tilde{g}_{j}(u)=g_{j}(d_ju), 
	$$
	for all $m\in\en_0$, $i,j,l\in\{1,\ldots,N\}$, and $u\in\er$. Model \eqref{eq:hopfield-high-order-proof} is a particular case of model \eqref{eq:modelo-geral-aplicacao} with 
\begin{eqnarray}\label{eq:def-h-high-ordel-model}
h_{ij}(m,\ov\alpha)=\til{a}_{ij}(m)\til{f}_j(\alpha_j(0))+\sum_{l=1}^N\til{b}_{ijl}(m)\til{g}_j(\alpha_j(-\tau_{ijl}(m)))\til{g}_l(\alpha_l(-\xi_{ijl}(m))),
\end{eqnarray}
for all $\ov{\alpha}=\prts{\alpha_1, \ldots, \alpha_N} \in X^N$, $m\in\en_0$, and $i,j\in\{1,\ldots,N\}$.

For each $i,j\in\{1,\ldots,N\}$, from (HO2) and (HO3) the function $h_{ij}$, defined by \eqref{eq:def-h-high-ordel-model}, verifies 
$$
  \begin{array}{l}
  	|h_{ij}(m,\ov\alpha)-h_{ij}(m,\ov\beta)|
  	\leq d_i^{-1}a_{ij}^+|\til{f}_j(\alpha_j(0))-\til{f}_j(\beta_j(0))|\\
  	+\dst\sum_{l=1}^Nd_i^{-1}b_{ijl}^+|\til{g}_j(\alpha_j(-\tau_{ijl}(m)))\til{g}_l(\alpha_l(-\xi_{ijl}(m)))
  	-\til{g}_j(\beta_j(-\tau_{ijl}(m)))\til{g}_l(\beta_l(-\xi_{ijl}(m)))|\\
  	\leq d_i^{-1}a_{ij}^+F_jd_j|\alpha_j(0)-\beta_j(0)|\\
  	
  	+\dst\sum_{l=1}^Nd_i^{-1}b_{ijl}^+|\til{g}_j(\alpha_j(-\tau_{ijl}(m)))\til{g}_l(\alpha_l(-\xi_{ijl}(m)))-\til{g}_j(\alpha_j(-\tau_{ijl}(m)))\til{g}_l(\beta_l(-\xi_{ijl}(m)))|\\
  	+\dst\sum_{l=1}^Nd_i^{-1}b_{ijl}^+|\til{g}_j(\alpha_j(-\tau_{ijl}(m)))\til{g}_l(\beta_l(-\xi_{ijl}(m)))-\til{g}_j(\beta_j(-\tau_{ijl}(m)))\til{g}_l(\beta_l(-\xi_{ijl}(m)))|\\
  	\leq d_i^{-1}a_{ij}^+F_jd_j\|\ov\alpha-\ov\beta\|
  	+\dst\sum_{l=1}^Nd_i^{-1}b_{ijl}^+m_jG_ld_l|\alpha_l(-\xi_{ijl}(m))-\beta_l(-\xi_{ijl}(m))|\\
  	+\dst\sum_{l=1}^Nd_i^{-1}b_{ijl}^+m_lG_jd_j|\alpha_j(-\tau_{ijl}(m))-\beta_j(-\tau_{ijl}(m))|\\
  	\leq \left(d_i^{-1}a_{ij}^+d_jF_j+\dst\sum_{l=1}^Nd_i^{-1}b_{ijl}^+\left(m_jd_lG_l+m_ld_jG_j\right)\right)\|\ov\alpha-\ov\beta\|,\\
  \end{array}
$$
for all $m\in\en_0$, and $\ov\alpha,\ov\beta\in X^N$. Consequently hypothesis (A1) holds with 
$$
  H_{ij}=\left(d_i^{-1}a_{ij}^+d_jF_j+\dst\sum_{l=1}^Nd_i^{-1}b_{ijl}^+\left(m_jd_lG_l+m_ld_jG_j\right)\right),\Es\forall i,j\in\{1,\ldots,N\}.
$$
By hypothesis \eqref{eq:h-o-model-hipotese}, condition \eqref{eq:cond-est-mod-geral-aplicacoes-disc} also holds and the result follows from Theorem \ref{teo:est-modelo-geral-aplicacoes-discreto}.
\end{proof}
Now we consider model \eqref{eq:hopfield-high-order} with periodic delays and coefficients functions.  From Theorem \ref{teo:modelo-geral-aplicacao-periodico} and the proof of Theorem \ref{teo:criterio-estabilidade-HOHM}, we obtain the next result.
	\begin{corollary}\label{cor:exist-est-sol-periodic-model-HOHM-periodico}
		Assume $c_i$, $a_{ij}$, $b_{ijl}$, $\tau_{ijl}$, and $\xi_{ijl}$ are $\omega$-periodic functions.
		
		If there is $\ov q=(q_1,\ldots,q_N)>0$ such that condition \eqref{eq:h-o-model-hipotese} holds, then \eqref{eq:hopfield-high-order} has a unique $\omega$-periodic solution which is globally exponentially stable. 
	\end{corollary}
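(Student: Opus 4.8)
The plan is to reduce the periodic high-order model \eqref{eq:hopfield-high-order} to the general periodic equation \eqref{eq:modelo-geral-aplicacao} and then invoke Theorem \ref{teo:modelo-geral-aplicacao-periodico}. The argument runs exactly parallel to the proof of Theorem \ref{teo:criterio-estabilidade-HOHM}; the only genuinely new ingredient is the verification of the periodicity conditions (P1) and (P2) for the transformed equation.

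First I would fix the positive vector furnished by \eqref{eq:h-o-model-hipotese} and perform the same rescaling $y_i(m)=d_i^{-1}x_i(m)$ used in the proof of Theorem \ref{teo:criterio-estabilidade-HOHM}. This turns \eqref{eq:hopfield-high-order} into \eqref{eq:hopfield-high-order-proof}, which is the instance of \eqref{eq:modelo-geral-aplicacao} with $h_{ij}$ given by \eqref{eq:def-h-high-ordel-model}, $\til a_{ij}(m)=d_i^{-1}a_{ij}(m)$, and $\til b_{ijl}(m)=d_i^{-1}b_{ijl}(m)$. The Lipschitz estimates carried out there show that (A1) holds with the displayed constants $H_{ij}$ and that the scalar condition \eqref{eq:cond-est-mod-geral-aplicacoes-disc} is implied by \eqref{eq:h-o-model-hipotese}; I would cite these computations rather than repeat them.

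Next comes the only new point: checking the periodicity hypotheses of Theorem \ref{teo:modelo-geral-aplicacao-periodico}. Since $c_i$ is $\omega$-periodic by assumption, (P1) is immediate. For (P2), observe that the rescaling constants $d_i$ do not depend on $m$, so $\til a_{ij}$ and $\til b_{ijl}$ inherit the $\omega$-periodicity of $a_{ij}$ and $b_{ijl}$; moreover $\til f_j$ and $\til g_j$ are time-independent, and the delays $\tau_{ijl},\xi_{ijl}$ are $\omega$-periodic. Substituting $m+\omega$ for $m$ in \eqref{eq:def-h-high-ordel-model} and using these facts yields $h_{ij}(m+\omega,\ov\alpha)=h_{ij}(m,\ov\alpha)$ for every $\ov\alpha\in X^N$, whence $h_i=\sum_j h_{ij}$ is $\omega$-periodic and (P2) holds. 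I expect this constant-rescaling-preserves-periodicity step to be the main (mild) obstacle, since one must confirm that the change of variables does not disturb the periodic structure before appealing to the abstract theorem.

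With (A1), (P1), (P2), and \eqref{eq:cond-est-mod-geral-aplicacoes-disc} in force, Theorem \ref{teo:modelo-geral-aplicacao-periodico} produces a globally exponentially stable $\omega$-periodic solution $\ov y^*$ of the rescaled equation. Undoing the substitution, $x_i^*(m):=d_i\,y_i^*(m)$ defines an $\omega$-periodic solution of \eqref{eq:hopfield-high-order}; because the linear rescaling is a bijection commuting with the time shift, global exponential stability transfers back to $\ov x^*$. Finally, uniqueness is forced by the exponential estimate \eqref{eq:estabilidade-periodica}: if $\ov z^*$ were a second $\omega$-periodic solution, then, evaluating along $m=k\omega$ and using periodicity, $\norm{\ov z^*_0-\ov x^*_0}=\norm{\ov x_{k\omega}(\cdot,\ov z^*_0)-\ov x^*_{k\omega}}\le Q(k\omega)\norm{\ov z^*_0-\ov x^*_0}\to 0$, so $\ov z^*_0=\ov x^*_0$ and the two solutions coincide.
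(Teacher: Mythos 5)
Your proposal is correct and follows essentially the same route the paper intends: the paper derives this corollary by combining the rescaling argument from the proof of Theorem \ref{teo:criterio-estabilidade-HOHM} (which yields (A1) and condition \eqref{eq:cond-est-mod-geral-aplicacoes-disc} for the transformed system) with Theorem \ref{teo:modelo-geral-aplicacao-periodico}, exactly as you do, with the observation that the constant rescaling preserves $\omega$-periodicity being the only point needing comment. Your added remarks on transferring stability back through the bijective rescaling and on uniqueness via the contraction estimate are consistent with, and slightly more explicit than, what the paper leaves implicit.
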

	
 The exponential stability of  \eqref{eq:hopfield-high-order} with constants coefficients, $\tau_{ijl}(m)=\xi_{ijl}(m)$ for all $i,j,l\in\{1,\ldots,N\}$ and $m\in\en_0$, and without delay in the leakage terms ($\tau=0$) was recently studied in \cite{Dong_Wang_Zhang-AMC-2020}. The authors also assume that
 $$
   f_i(0)=g_i(0)=0,\Es\forall i\in\{1,\ldots,N\},
 $$ 
 which implies that $x=0$ is an equilibrium point of 
 \begin{eqnarray}\label{eq:hopfield-high-order-constant-coefficients}
 	x_i(m+1)=\lefteqn{c_ix_i(m)+\dst\sum_{j=1}^{N}a_{ij}f_j(x_j(m))}\nonumber\\
 	& +\dst\sum_{j=1}^{N}\sum_{l=1}^Nb_{ijl}g_j(x_j(m-\tau_{ijl}(m)))g_l(x_l(m-\tau_{ijl}(m))),
 \end{eqnarray}
where $c_i\in]-1,1[$ and $a_{ij},b_{ijl}\in\er$.
Under all these restrictions, in \cite{Dong_Wang_Zhang-AMC-2020} the global exponential stability of the zero solution of  \eqref{eq:hopfield-high-order-constant-coefficients} is obtained with the hypothesis: there is $\ov q=(q_1,\ldots,N)>0$ such that
\begin{eqnarray*}\label{eq:h-o-model-hipotese-coeficients-constantes}
	d_i(1-|c_i|) > \sum_{j=1}^{N} \left(d_jF_j|a_{ij}|+\sum_{l=1}^N |b_{ijl}|m_jd_lG_l\right),\,\,\forall i\in\{1,\ldots,N\},
\end{eqnarray*}
which is a slight weaker condition than \eqref{eq:h-o-model-hipotese}.

\section{Numerical simulation}\label{numericalexample}
In this section, we give a numerical example to illustrate the effectiveness of some the new results presented in this paper.

In model  \eqref{eq:modelo-Hopfield-geral} with $N=2$, let $K=2$ and
\begin{align*}
	&
	c_1(m) = \frac{1}{4}\cos \dfrac{2m\pi}{\omega}, \ \ \ \
	c_2(m) = \frac{1}{12}\sin\dfrac{2m\pi}{\omega}, \ \ \ \
	b_{111}(m) = \frac{1}{8}\cos\dfrac{2m\pi}{\omega}\\
	&
	b_{112}(m) =\frac{1}{8}\sin\dfrac{2m\pi}{\omega}, \ \ \ \
	b_{121}(m) =0, \ \ \ \
	b_{221}(m) =-\frac{1}{6}\sin\dfrac{2m\pi}{\omega}, \ \ \ \
	b_{122}(m) =\frac{1}{6}\sin\dfrac{2m\pi}{\omega}\\
	&
	b_{211}(m) = \frac{1}{4}\cos\dfrac{2m\pi}{\omega}, \ \ \ \
	b_{212}(m) = \frac{1}{4}\sin\dfrac{2m\pi}{\omega}, \ \ \ \
	b_{222}(m) = -\frac{5}{12}\sin\dfrac{2m\pi}{\omega},\\
	&
	\tau_{111}(m) = \tau_{121}(u) = \tau_{211}(m) = \tau_{221}(m) =0,\\
	&
	\tau_{112}(m) = \tau_{122}(u) = \tau_{212}(m) = \tau_{222}(m) =2+(-1)^m,\\
	&
	f_{111}(u) = f_{121}(u) = f_{211}(u) = f_{221}(u) =\arctan u, \ \ \ \ I_1(m)=0, \ \ \ \ I_2(m)=\frac{1}{2}\cos\frac{2m\pi}{\omega},\\
	&
	f_{112}(u) = f_{122}(u) = f_{212}(u) = f_{222}(u) = \tanh u, \ \ \ \
	\omega = 10, \ \ \ \ \tau=2, \ \ \ \ r=3,
\end{align*}
thus all coefficients and delay functions are 10-periodic.

We have $F_{ijk}=1$ for all $i,j,k\in\{1,2\}$ and the M-matrix, defined by \eqref{matriz-M}, has the form 
$$
  \mathcal{M}=\left[\begin{array}{cc}
  	1 &0\\
  	0&1\\
  	\end{array}\right]-\left[\begin{array}{cc}
  	\frac{1}{4} &0\\
  	0&\frac{1}{12}\\
  \end{array}\right]-\left[\begin{array}{cc}
  \frac{2}{8} &\frac{1}{6}\\
  \frac{2}{4}&\frac{7}{12}\\
\end{array}\right]=\left[\begin{array}{cc}
\frac{1}{2} &-\frac{1}{6}\\
-\frac{1}{2}&\frac{1}{3}\\
\end{array}\right].
$$
As $\mathcal{M}$ is a non-singular M-matrix (the principal minors are positive \cite{fiedler}), by Corollary \ref{cor:exist-est-sol-periodic-model-hopfiels-periodico} this example has a unique 10-periodic solution which is globally exponentially stable.  Figures \ref{fig:modelacao-periodica} gives the plot of the periodic solution of the illustrative numerical example.  Figures \ref{fig:modelacaox1} and \ref{fig:modelacaox2} give the plot of first and second, respectively, component of three solutions of the illustrative numerical example, together with the periodic solution.\\

\begin{figure}[h]
	\centering
	{\includegraphics[width=8.5cm]{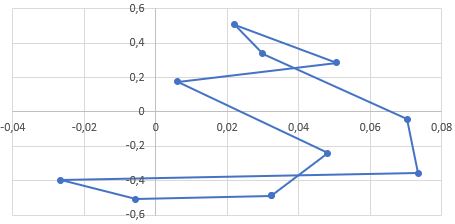}}
	\caption{The 10-periodic solution of the numerical example.
	}\label{fig:modelacao-periodica}
\end{figure}

\begin{figure}[h]
	\centering
	{\includegraphics[width=12.5cm]{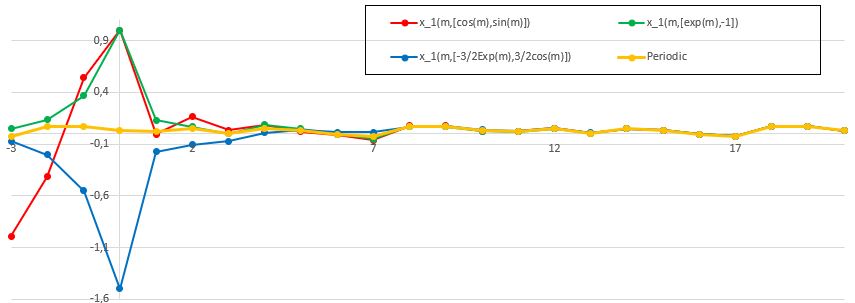}}
	\caption{The first component, $x_1(m)$, of three solutions of the numerical example with initial condition $
		\overline{x}_0(j)=\left(\cos(j),\sin(j)\right),\,j\in[-3,0]_\ze$, $\overline{x}_0(j)=\left(\e^j,-1\right),\,j\in[-3,0]_\ze$, and $\overline{x}_0(j)=\left(-\frac{3}{2}\e^j,\frac{3}{2}\cos(j)\right),\,j\in[-3,0]_\ze$ respectively.
	}\label{fig:modelacaox1}
\end{figure}

\begin{figure}[h]
	\centering
	{\includegraphics[width=12.5cm]{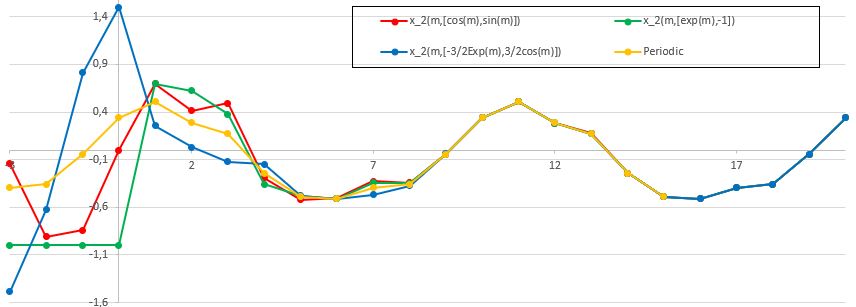}}
	\caption{The second component, $x_2(m)$, of three solutions of the numerical example with initial condition $
		\overline{x}_0(j)=\left(\cos(j),\sin(j)\right),\,j\in[-3,0]_\ze$, $\overline{x}_0(j)=\left(\e^j,-1\right),\,j\in[-3,0]_\ze$, and $\overline{x}_0(j)=\left(-\frac{3}{2}\e^j,\frac{3}{2}\cos(j)\right),\,j\in[-3,0]_\ze$ respectively.
	}\label{fig:modelacaox2}
\end{figure}

%-----------------------------------------------------------------------------%

\noindent
{\bf Acknowledgments.}\\
This work was partially supported by Funda\c{c}\~ao para a Ci\^encia e a Tecnologia (Portugal) within the Projects UIDB/00013/2020 and UIDP/00013/2020 of CMAT-UM (Jos\'e J. Oliveira) and Project UIDB/00212/2020 of CMA-UBI (Ant\'onio J. G. Bento and C\'esar M. Silva).

%-----------------------------------------------------------------------------%
\bibliographystyle{elsart-num-sort}

%-----------------------------------------------------------------------------%
%-----------------------------------------------------------------------------%
\end{document}